\documentclass[11pt,letterpaper,reqno]{amsart}
\usepackage[T1]{fontenc}
\usepackage{lmodern}
\usepackage[margin=1in]{geometry}
\usepackage{amsmath,amssymb,mathtools}
\usepackage[expansion=false]{microtype}
\usepackage{xcolor}
\usepackage[colorlinks=true,linkcolor=blue!50!black,citecolor=green!35!black,urlcolor=blue!60!black]{hyperref}
\newtheorem{theorem}{Theorem}[section]
\newtheorem{lemma}[theorem]{Lemma}
\newtheorem{proposition}[theorem]{Proposition}
\newtheorem{corollary}[theorem]{Corollary}
\theoremstyle{remark}

\numberwithin{equation}{section}
\newcommand{\R}{\mathbb R}
\newcommand{\C}{\mathbb C}
\newcommand{\E}{\mathbb E}
\newcommand{\supp}{\operatorname{supp}}
\newcommand{\norm}[1]{\left\lVert#1\right\rVert}

\newcommand{\eps}{\varepsilon}
\newcommand{\cR}{\mathcal R}
\newcommand{\cG}{\mathcal G}
\newcommand{\cT}{\mathcal T}
\allowdisplaybreaks[2]
\title[Inverse random source scattering] {Inverse source problem for the stochastic fractional Helmholtz equation in inhomogeneous media}
\author[D. Qiu]{Dong Qiu}
  \address{School of Mathematical Sciences, Zhejiang University, Hangzhou 310058, China}
\email{qiudong@zju.edu.cn}

  \author[X. Xu]{Xiang Xu}
  \address{School of Mathematical Sciences, and Center for Interdisciplinary Applied Mathematics, Zhejiang University, Hangzhou 310058, China}
\email{xxu@zju.edu.cn}
  
 \author[Y. Zhao]{Yue Zhao}
\address{School of Mathematics and Statistics, and Key Lab NAA-MOE, Central China Normal University,
Wuhan 430079, China}
\email{zhaoy@ccnu.edu.cn}

\date{}
\subjclass[2020]{35R30, 35R11, 35R60, 35P25}
\keywords{fractional Helmholtz equation, inverse source problem, white noise, single frequency, stability}
\begin{document}
\begin{abstract}
This paper is concerned with an inverse source problem for the stochastic fractional Helmholtz equation in a three-dimensional inhomogeneous medium.
The source is modeled as real Gaussian spatial white noise with an unknown nonnegative strength function.
For admissible frequencies, we establish well-posedness of the radiating solution in the distributional sense and show that it has a smooth representation outside the source region.
For the inverse problem, by using two-point correlations of the field measured in an exterior open set at a single frequency, we prove a logarithmic stability estimate for recovering the source strength under an a priori Sobolev bound.
As a uniform H\"older stability estimate is unavailable for the general admissible class, by assuming the difference between two source strengths has a fixed sign, we improve the stability to H\"older continuity with an optimal Hölder exponent.
The analysis combines scattering theory and quantitative continuation for the fractional Laplacian using the Caffarelli-Silvestre extension.
\end{abstract}
\maketitle

\section{Introduction}

The fractional Helmholtz operator arises in fixed-energy models of fractional quantum mechanics \cite{Laskin1} and in descriptions of spatial dispersion \cite{TT}.
Its nonlocality couples the field across separated points.
In an inhomogeneous medium, the refractive coefficient and potential also affect wave propagation.
When the source fluctuates randomly, correlations of the radiated field provide information about its statistical properties.

We study the inverse source problem for the stochastic fractional Helmholtz equation
\begin{equation}\label{eq:model}
\bigl(( -\Delta)^\sigma-k^{2\sigma}r(x)+q(x)\bigr)u(x,k)
=\sqrt{\mu(x)}\,\dot W(x)\quad\text{in }\mathbb R^3,
\end{equation}
where $0<\sigma<1$, $k>0$, $r$ is a refractive coefficient, and $q$ is a potential.
The fractional Laplacian is defined by the Fourier multiplier
\[
 \widehat{(-\Delta)^\sigma v}(\xi)=|\xi|^{2\sigma}\widehat v(\xi).
\]
The functions $1-r$ and $q$ are compactly supported, while $\dot W$ denotes real Gaussian spatial white noise.
The nonnegative function $\mu$ describes the strength of the random source.
We assume that the source and the inhomogeneities are contained in $K$ with $K\Subset B$, where $B$ is a bounded region.
We seek the outgoing field satisfying the Sommerfeld radiation condition:
\begin{equation}\label{src}
 \lim_{\rho\to\infty}\rho(\partial_\rho u-iku)=0,
 \qquad \rho=|x|,
\end{equation}
uniformly in the direction $\hat x=x/|x|$.

Our objective is to determine $\mu$ from measurements made in an open set $O$ outside $B$.
More precisely, the data are the two-point correlations
\begin{equation}\label{eq:near-data}
F(x,z;k)=\mathbb E[u(x,k)u(z,k)],\qquad x,z\in O,
\end{equation}
at a fixed frequency $k$.
The geometric assumptions are specified at the beginning of Section~\ref{dp} and apply throughout the paper.
Because the source is spatial white noise, its individual realizations are distributions rather than ordinary functions.
Establishing the meaning and regularity of the radiating solution is therefore a necessary part of the inverse analysis.

The known coefficients $r,q\in C^\infty(\R^3;\R)$ are fixed and satisfy
\[
 r\geq r_0>0,\qquad
 \supp(r-1)\cup\supp q\subset K.
\]

Our main result is formulated as follows.
Let $u_1,u_2$ be the solutions corresponding to two sources $\sqrt{\mu_1}\dot W$ and $\sqrt{\mu_2}\dot W$, respectively.
The data discrepancy is defined by
\begin{equation}\label{eq:full-error}
 \delta=\norm{F_1-F_2}_{L^2(O\times O)},
\end{equation}
where $F_i = \E[u_i(x)u_i(z)], i=1,2$.
For $m>3/2$ and $E>0$, introduce the a priori class for the strength functions
\begin{equation}\label{eq:class}
 \mathcal A=\{\mu\in H^m(\R^3):\mu\geq0,\ \supp\mu\subset K,
                         \ \norm\mu_{H^m}\leq E\}.
\end{equation}
According to Theorem \ref{thm:direct}, the direct problem for the fractional Helmholtz equation with variable coefficients is well posed for all $k>0$ except for $k\in\Sigma\subset (0, +\infty)$, where $\Sigma$ consists of discrete poles.
Thus, for the study of the inverse problem we assume $k\notin\Sigma$.

The following stability estimate is the main result of this paper, which implies the uniqueness of the inverse problem.

\begin{theorem}\label{thm:pair-log}
Assume $k\notin\Sigma$.
Let $0<\sigma<1$ and $m>3/2$.
For $\mu_1,\mu_2\in\mathcal A$, the following stability estimate holds
\begin{equation}\label{eq:pair-log}
 \norm{\mu_1-\mu_2}_{L^\infty}
 \leq CE\left(\log\left(3+\frac E\delta\right)\right)^{-\theta},
\end{equation}
where
\begin{equation}\label{eq:pair-log-exponent}
 \theta=\theta_0\frac{m-3/2}{\max\{m-1+\sigma,1\}}>0.
\end{equation}
Here $C>0$ is a constant, and $\theta_0$ may be any fixed exponent with $0<\theta_0<\sigma/\kappa$, where $\kappa>0$ is the constant specified by Lemma~\ref{lem:local-log}.
\end{theorem}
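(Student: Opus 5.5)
The idea is to convert the correlation data into Fourier information about $\mu_1-\mu_2$ at one fixed frequency. Then I would use quantitative unique continuation for the fractional operator (Lemma~\ref{lem:local-log}) to pass from the observation set $O$ to the region near $K$, and finish by interpolating with the a priori $H^m$ bound.

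\textbf{Step 1 (correlation identity).} Let $G(x,y)$ be the outgoing Green function of $(-\Delta)^\sigma-k^{2\sigma}r+q$, which exists for $k\notin\Sigma$ by Theorem~\ref{thm:direct}. Since $u_i(x)=\int G(x,y)\sqrt{\mu_i(y)}\,dW(y)$ and $G$ is smooth for $x\in O$, $y\in K$, the It\^o isometry gives
\[
F_1(x,z)-F_2(x,z)=\int_K G(x,y)G(z,y)\,(\mu_1-\mu_2)(y)\,dy,\qquad x,z\in O .
\]
This is a bilinear moment of $\mu:=\mu_1-\mu_2$. Fix $z\in O$ and set $w_z(x)=\int G(x,y)G(z,y)\mu(y)\,dy$. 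Then $w_z$ solves the fractional Helmholtz equation with source $G(z,\cdot)\mu$. Outside $B$ it is the nonlocal operator applied to it that vanishes; $w_z$ itself need not vanish there. The data bound $\|w_z\|_{L^2(O)}$ in an averaged sense over $z$, with $\int_O\|w_z\|^2_{L^2(O)}dz=\delta^2$.

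\textbf{Step 2 (continuation from $O$ to a neighborhood of $\partial B$).} Here I would apply Lemma~\ref{lem:local-log}, the quantitative continuation via the Caffarelli--Silvestre extension. It gives $\|w_z\|_{L^2(U)}\le C E\,\bigl(\log(3+E/\|w_z\|_{L^2(O)})\bigr)^{-\sigma/\kappa'}$ on a neighborhood $U$ of $B$, using the a priori energy bound $\|w_z\|\lesssim E$ from well-posedness. Repeating this in the $z$ variable, by symmetry of $G$, controls the full kernel $\int G(x,y)G(z,y)\mu\,dy$ for $x,z$ in a neighborhood of $\overline B$ with a log modulus of exponent $\theta_0<\sigma/\kappa$. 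The slack between $\theta_0$ and $\sigma/\kappa$ absorbs the loss from iterating the estimate and from passing from $L^2$ to pointwise bounds.

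\textbf{Step 3 (from the kernel to $\mu$).} Once $w_z$ is known near $B$, apply the operator in $x$: $((-\Delta)^\sigma-k^{2\sigma}r+q)w_z=G(z,\cdot)\mu$. Taking $z$ near $K$ with a suitable test family recovers $\mu$ in a negative norm $H^{-(m-1+\sigma)}$ or $H^{-1}$, depending on which costs more. Estimating the $2\sigma$-order operator on a function known only locally costs $\max\{m-1+\sigma,1\}$ derivatives. Alternatively, one can test against products of Green functions: their span is dense by the Runge property that follows from unique continuation. Either way this yields $\|\mu\|_{H^{-s}}\lesssim E(\log(3+E/\delta))^{-\theta_0}$ with $s=\max\{m-1+\sigma,1\}$. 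Interpolating between $H^{-s}$ and $H^m$, and then using the Sobolev embedding $H^{3/2+\epsilon}\subset L^\infty$, gives the exponent $\theta_0(m-3/2)/(m+s)$. Some care is needed to match the stated denominator $\max\{m-1+\sigma,1\}$; I expect the intended route interpolates $\|\mu\|_{L^\infty}\le\|\mu\|_{H^{-s'}}^{\alpha}\|\mu\|_{H^m}^{1-\alpha}$ with the right indices.

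\textbf{Main obstacle.} The hardest part is Step 2/3: propagating smallness through the nonlocal operator, whose output outside $B$ is not zero. One must use the extension problem with a doubling or three-ball inequality, uniformly over the product variable $z$. Then the quantitative density step must be kept polynomial in the loss, so that only a single logarithm appears.
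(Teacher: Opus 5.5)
There is a genuine gap in Steps~2--3. The overall architecture matches the paper: the covariance identity, reduction to a deterministic radiating field, Caffarelli--Silvestre continuation, and interpolation with the $H^m$ prior. The mechanism you sketch, however, would not work.

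First, Lemma~\ref{lem:local-log} requires the function it is applied to to lie in $H^\sigma(\R^3)$ with a \emph{global} bound, since the extension is built from the whole function. Your $w_z=\cR(k)(\cG_k(z,\cdot)\mu)$ is outgoing and behaves like $c\,\widehat H(k\hat x)e^{ik|x|}/|x|$ at infinity, where $H$ is its effective source. So it is not in $L^2(\R^3)$ unless $\widehat H$ vanishes on $\{|\xi|=k\}$. Well-posedness gives only local bounds, and a cutoff $\chi w_z$ destroys the smallness of $(-\Delta)^\sigma(\chi w_z)$ on $W$.

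Second, even granting $L^2(B)$-smallness of $w_z$, you cannot ``apply the operator in $x$'' to recover the source. The value of $(-\Delta)^\sigma w_z$ on $K$ depends on $w_z$ on all of $\R^3$.

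Third, continuing again in the $z$ variable feeds a logarithmic bound into a second logarithmic estimate, which gives a $\log\log$ modulus. The slack $\theta_0<\sigma/\kappa$ only absorbs $\log L$ factors. Testing against products of Green functions via qualitative Runge density likewise gives no rate without a quantitative Runge estimate, which costs another logarithm.

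Finally, as you noticed, an $H^{-s}$ bound interpolated against $H^m$ yields only $\theta_0(m-3/2)/(m+s)$, strictly below the stated $\theta$. The stated exponent needs a \emph{positive}-order bound on $\mu_1-\mu_2$.

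The paper avoids all of this as follows.
\begin{itemize}
\item It contracts one variable of $F_1-F_2$ against a \emph{single fixed} probe $g\in C_c^\infty(O)$ from Lemma~\ref{lem:runge}, chosen so that $\norm{\cT g-1}_{C(K)}\le 1/4$. Then $w=\cR(k)(\eta v)$ with $\eta=\mu_1-\mu_2$, $v=\cT g$ is bounded away from zero on $K$, and $\norm{w}_{L^2(O)}\le C\delta$. Only one continuation step is needed, and the Runge step costs only a fixed constant.
\item It works with $Z=(-\Delta-k^2)w$ instead of $w$. With $H=\eta v-Vw$ supported in $K$, one has $\widehat Z=b_{\sigma,k}\widehat H$, and the multiplier $b_{\sigma,k}=(|\xi|^2-k^2)/(|\xi|^{2\sigma}-k^{2\sigma})$ removes the outgoing pole. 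Hence $Z\in H^\sigma(\R^3)$ with $(-\Delta)^\sigma Z=k^{2\sigma}Z$ outside $B$, and Lemma~\ref{lem:local-log} applies to $Z$. When $m<2-\sigma$, one first mollifies $H$ at scale $\tau$, losing a factor $\tau^{-a}$ with $a=(2-\sigma-m)_+$.
\item The key step is the coercivity identity $\int_B\overline H\,Z\,dx=(2\pi)^{-3}\int b_{\sigma,k}|\widehat H|^2\,d\xi\ge c\norm{H}_{H^{1-\sigma}}^2$. It uses only $Z|_B$, because $\supp H\subset B$, and turns $L^2(B)$-smallness of $Z$ into $H^{1-\sigma}$-smallness of $H$.
\item From $\eta v=H+V\cR_0(k)H$ and division by $v$ near $K$, $\norm{\eta}_{H^{1-\sigma}}$ is small. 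Interpolating $L^\infty$ between $H^{1-\sigma}$ and $H^m$ gives the factor $(m-3/2)/(m-1+\sigma)$.
\item Balancing the mollification error $\tau^{m-1+\sigma}$ against $\tau^{-a}L^{-\theta_0}$ gives the denominator $m-1+\sigma+a=\max\{m-1+\sigma,1\}$.
\end{itemize}
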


The logarithmic stability obtained above in general cannot be improved to a H\"older one.
To obtain a H\"older stability, we assume that the difference of the strength functions has a fixed sign.
As a consequence, we obtain the following H\"older stability.

\begin{theorem}\label{thm:ordered-holder}
Assume $0<\sigma<1$, $m>3/2$, and $k\notin\Sigma$.
Let $\mu_1,\mu_2\in\mathcal A$ satisfy either $\mu_1\geq\mu_2$ or $\mu_2\geq\mu_1$.
One has the stability estimates
\begin{equation}\label{eq:ordered-holder}
 \norm{\mu_1-\mu_2}_{L^1}\leq C\delta\quad {\rm and}\quad
 \norm{\mu_1-\mu_2}_{L^\infty}
 \leq CE^{1-\beta}\delta^\beta,
\end{equation}
where $C$ is a generic constant and
\[
 \beta=\frac{m-3/2}{m+3/2}.
\]
Moreover, the exponent $\beta$ is optimal.
\end{theorem}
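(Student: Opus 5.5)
The plan is to represent the correlation data as an integral of the unknown strength against products of Green's functions, and then to exploit the sign condition. By Theorem~\ref{thm:direct}, for $k\notin\Sigma$ the radiating solution is $u(x)=\int G(x,y)\sqrt{\mu(y)}\,dW(y)$, where $G$ is the outgoing Green's function of $(-\Delta)^\sigma-k^{2\sigma}r+q$. For $x\in O$ the kernel $G(x,\cdot)$ is smooth on $K$. The It\^o isometry for real white noise then gives
\[
F(x,z)=\int_K G(x,y)G(z,y)\mu(y)\,dy ,\qquad x,z\in O .
\]
Set $\nu=\mu_1-\mu_2$, and assume without loss of generality that $\nu\ge0$. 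The difference of the data is $F_1-F_2=\int_K G(x,y)G(z,y)\nu(y)\,dy$.

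\textbf{Step 1: the $L^1$ bound.} Since $\nu$ has a sign, $\|\nu\|_{L^1}=\int\nu$. We therefore only need one weight $w(y)$ that lies in the closed span of products $G(x,y)G(z,y)$ with $x,z\in O$, is bounded below by $c>0$ on $K$, and admits a representation
\[
w(y)=\iint_{O\times O}\phi(x,z)\,G(x,y)G(z,y)\,dx\,dz,\qquad \phi\in L^2(O\times O).
\]
Given such a weight, Cauchy--Schwarz yields
\[
c\|\nu\|_{L^1}\le \int w\nu = \iint\phi\,(F_1-F_2)\le\|\phi\|_{L^2}\,\delta .
\]
To build $w$, I would take $\phi(x,z)=\overline{\psi(x)}\psi(z)$ with $\psi$ chosen later. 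Then $w=\bigl|\int_O\psi(x)G(x,y)\,dx\bigr|^2$, provided $\phi$ is arranged so that the pairing with $\overline G G$ appears rather than with $GG$. Here the real-valued correlation $\E[uu]$, rather than $\E[u\bar u]$, is a nuisance. I would handle it by using real and imaginary parts: taking $\phi$ real-bilinear in $(\Re G,\Im G)$ combinations, which produces $|\Re v|^2$ or $|\Im v|^2$ with $v(y)=\int\psi G(x,y)\,dx$.

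\textbf{Step 2: the main obstacle.} Step 1 requires $v=\int_O\psi(x)G(x,\cdot)\,dx$, a solution of the adjoint fractional equation with source $\psi$ supported in $O$, to have real part bounded away from zero on all of $K$. The steps are as follows.
\begin{itemize}
\item The functions $\{\int_O\psi G\}$ are dense in the solution space near $K$. I would show this by Runge approximation, using the fractional unique continuation property established through the Caffarelli--Silvestre extension (the qualitative version of Lemma~\ref{lem:local-log}): if $h\perp\{G(x,\cdot)\}_{x\in O}$, then the field it generates vanishes on $O$, hence outside $B$, hence $h=0$.
\item By this density, any fixed smooth target, for instance a solution equal to a function close to $1$ on $K$, can be approximated in $C(K)$.
\item Choose a fixed approximant with $\Re v\ge1/2$ on $K$. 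Its $\psi$ has some fixed $L^2$ norm, which depends only on $K,O,k,r,q,\sigma$.
\end{itemize}
This gives the constant $C$ in $\|\nu\|_{L^1}\le C\delta$.

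\textbf{Step 3: the $L^\infty$ bound by interpolation.} Interpolate between $L^1$ and $H^m$ with $\|\nu\|_{H^m}\le2E$ and $m>3/2$. Mollify at scale $h$ to get $\|\nu*\rho_h\|_{L^\infty}\le h^{-3}\|\nu\|_{L^1}$. The remainder satisfies $\|\nu-\nu*\rho_h\|_{L^\infty}\lesssim h^{m-3/2}E$, using $H^m\subset C^{m-3/2}$ (or a Fourier-side split at $|\xi|\sim1/h$). Optimizing $h^{-3}\delta=h^{m-3/2}E$ gives $h=(\delta/E)^{1/(m+3/2)}$, and hence $\|\nu\|_{L^\infty}\le CE^{1-\beta}\delta^\beta$ with $\beta=(m-3/2)/(m+3/2)$.

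\textbf{Step 4: optimality of $\beta$.} Take $\nu_\epsilon=\epsilon^{s}\chi(\cdot/\epsilon)$ with $\chi\ge0$ a bump, scaled so that $\|\nu_\epsilon\|_{H^m}\sim E$, that is $s=m-3/2$. Then $\|\nu_\epsilon\|_{L^\infty}\sim\epsilon^{m-3/2}$. The forward map is bounded from $L^1$ into $L^2(O\times O)$ because the kernels are smooth, so $\delta\lesssim\|\nu_\epsilon\|_{L^1}\sim\epsilon^{m+3/2}$. Choosing $\mu_2=\mu_0$ with $\mu_0$ strictly positive near the bump and $\mu_1=\mu_0+\nu_\epsilon$ keeps both in $\mathcal A$ with the ordering. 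Thus $\|\nu\|_\infty\sim\delta^{\beta}$, and no larger exponent is possible.
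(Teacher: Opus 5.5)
Your Steps 3 and 4 follow the paper. The paper uses the Fourier split $\norm{\eta}_{L^\infty}\le C(R^3\norm\eta_{L^1}+R^{3/2-m}\norm\eta_{H^m})$, which you give as an alternative; a plain mollifier saturates at order $h^2$, so for large $m$ the Fourier version is the one to use. For optimality the paper takes the smaller strength to be $0$.

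\textbf{The gap is in Step 1, in how you construct the positive weight.} The data are the bilinear correlations $F=\E[uu]$. Writing $\cG_x=\cG_k(x,\cdot)$, every weight you can pair against $\nu$ therefore has the form $\iint_{O\times O}\phi(x,z)\cG_x\cG_z\,dx\,dz$. Taking real and imaginary parts of $F_1-F_2$ gives only
\[
 \int(\Re\cG_x\Re\cG_z-\Im\cG_x\Im\cG_z)\nu
 \quad\text{and}\quad
 \int(\Re\cG_x\Im\cG_z+\Im\cG_x\Re\cG_z)\nu .
\]
The separate pairings $\int\Re\cG_x\Re\cG_z\,\nu$ and $\int\overline{\cG_x}\cG_z\,\nu$ would require the correlation $\E[u\bar u]$, which is not measured. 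So the real-bilinear combinations you propose do not produce the weights $|v|^2$ or $(\Re v)^2$. Consequently the condition $\Re v\geq1/2$ in Step 2 is not the right one either.

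\textbf{The repair: no modulus is needed.} Take $\phi=\psi\otimes\psi$, which gives the weight $v^2$ with $v=\cT\psi$. Since $\{\cT g|_K\}$ is dense in the complex space $C(K;\C)$ (Lemma~\ref{lem:runge}), you can fix $\psi$ with $\norm{v-1}_{C(K)}\le1/4$. Then $\Re(v^2)=(\Re v)^2-(\Im v)^2\ge 9/16-1/16=1/2$ on $K$, and for $\nu\ge0$
\[
 \tfrac12\norm\nu_{L^1}\le\Re\int_K\nu v^2\,dy
 =\Re\iint_{O\times O}(F_1-F_2)(x,z)\psi(x)\psi(z)\,dx\,dz
 \le\norm\psi_{L^2(O)}^2\delta .
\]
A lower bound on $\Re v$ alone would not suffice here, because $\Im v$ enters $\Re(v^2)$ with a minus sign. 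This is why density must hold in all of $C(K;\C)$, not merely in a ``solution space near $K$''. That is exactly where nonlocality enters: for a local equation, uniform limits on $\operatorname{int}K$ would again be solutions, so a target like $1$ could not be approximated.

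The annihilator argument should also run as in Lemma~\ref{lem:runge}. An annihilating measure $h$ on $K$ gives $w=\cR(k)h$ with $w=(-\Delta)^\sigma w=0$ on $O$. Unique continuation for $(-\Delta)^\sigma$, applied to $(-\Delta-k^2)w$ (Lemma~\ref{lem:outgoing-UC}), then forces $h=0$. There is no classical ``vanishes outside $B$'' step.
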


Unique continuation has played a central role in inverse problems for fractional operators.
The fractional Calder\'on problem and its stability have been studied in \cite{GSU,RS,Ruland,Covi}.
For deterministic fractional Helmholtz equations, simultaneous recovery of a source and a scattering potential was studied in \cite{CL}.
High-frequency inverse scattering in variable refractive backgrounds was investigated in \cite{CDS}, and deterministic potential recovery from far-field scattering amplitudes was obtained in \cite{msq}.

The Caffarelli--Silvestre extension \cite{CS} realizes $(-\Delta)^\sigma$ as the weighted Dirichlet-to-Neumann operator of a local degenerate elliptic equation in one higher dimension.
This local--nonlocal correspondence extends to fractional powers of second-order operators \cite{ST}.
In fractional inverse problems, the extension has supported Carleman-based unique continuation \cite{RulandUCP}, partial-data recovery through Runge approximation \cite{GSU,GLX}, and quantitative propagation of smallness for stability \cite{RS,Ruland}.
It also provides a route, under the variable-coefficient assumptions of \cite{CGRU}, from partial nonlocal Calder\'on data to the full local Dirichlet-to-Neumann map.
Related extension-based reductions for elliptic inverse source and parabolic Calder\'on problems appear in \cite{LinSource,LLU}.
In the present proof, we use the extension to propagate smallness of an auxiliary field from the exterior observation region toward the source region.

For classical stochastic Helmholtz equations, inverse random-source scattering was studied in \cite{BCL2016}; stability for white-noise sources, including in inhomogeneous media, was established in \cite{LL}; and fixed-frequency conditional stability was obtained in \cite{MH}.
In the fractional setting, \cite{LiLi} recovers principal symbols of the covariance and relation operators of a microlocally isotropic generalized Gaussian source from a single realization of high-frequency far-field data.
Its model includes a known deterministic potential supported away from the source.
For a homogeneous fractional white-noise model, \cite{SE} proves single-frequency uniqueness for the variance and obtains increasing stability from multifrequency exterior correlations.

In this work, the unknown is the strength $\mu$ of a real spatial white-noise source in a known inhomogeneous medium.
Our aim is to establish a stability estimate for recovering the strength function from correlation data at  a single admissible frequency.
To address the low regularity of the white-noise source, we first develop a direct theory using the outgoing free resolvent and an integral equation formulation.
For admissible frequencies, the solution exists in the distributional sense and has a smooth Green function representation in the exterior observation region.
When $\sigma>3/4$, it also admits a continuous mild realization, so the stochastic integral formulation can be interpreted pointwise throughout the space.
For source strengths in a bounded $H^m(\mathbb R^3)$ class, $m>3/2$, we establish a logarithmic stability estimate in $L^\infty$ for recovering the source strength from exterior correlations at a fixed frequency, which also implies uniqueness.
We also show that a uniform H\"older stability estimate does not hold for the full admissible class.
If the difference of two strengths has a fixed sign, we obtain a Lipschitz estimate in $L^1$ and a H\"older estimate in $L^\infty$ with an optimal exponent.
The analysis combines scattering theory and
quantitative continuation for the fractional Laplacian using the Caffarelli--Silvestre extension.


The paper is organized as follows.
Section \ref{dp} establishes the direct problem and the continuous mild realization.
Section \ref{ip} proves the stability estimates and examines their sharpness.
In the appendix we collect the continuation and approximation results used in the inverse analysis.

\section{Direct problem}\label{dp}

Throughout the paper, including the statements in the Introduction, $B$ is a bounded open set, $K\Subset B$ is compact with nonempty interior, and $O$ is a nonempty bounded open subset of $\R^3\setminus\overline B$.
These assumptions imply $\operatorname{dist}(K,\overline O)>0$.
For the inverse results, the source strengths belong to $\mathcal A$ in \eqref{eq:class}.
We use the Fourier transform $\widehat f(\xi)=\int_{\R^3}e^{-ix\cdot\xi}f(x)\,dx$, whose inverse carries the factor $(2\pi)^{-3}$.
For the inverse estimates, $k\notin\Sigma$ is fixed.
Their constants may depend on $\sigma,m,k,r,q,K,B,O$ and the chosen $\theta_0$.
The exponent $\theta_0$ is fixed with $0<\theta_0<\sigma/\kappa$, where $\kappa$ is provided by Lemma~\ref{lem:local-log}, with the observation ball chosen once and for all inside $O$.

We make precise the distributional interpretation and the outgoing solution class.
A candidate solution is a tempered distribution $u$ whose Fourier transform is locally integrable near $\xi=0$.
We require $u$ to be smooth outside a ball and to satisfy $u(x)=O(|x|^{-1})$ and \eqref{src}, uniformly in $x/|x|$.
Near zero the product $|\xi|^{2\sigma}\widehat u$ is ordinary function multiplication; away from zero it is distributional multiplication by a smooth symbol.
A smooth cutoff combines these definitions, which agree on their overlap.
All distributional solutions and uniqueness assertions below are understood in this class.
For random fields, these conditions hold on one event of probability one.

\subsection{Well-posedness of the direct problem}
For the free equation ($r=1$, $q=0$), the outgoing resolvent is defined by
\[
 \cR_0(k)=\bigl((-\Delta)^\sigma-k^{2\sigma}-i0\bigr)^{-1}.
\]
Here $-i0$ denotes the distributional limit of the inverse with $-i\varepsilon$ as $\varepsilon\downarrow0$, selecting the outgoing solution.
Its convolution kernel $G_{\sigma,k}$ has the following properties (cf. \cite[Lemma~2.1]{SE}).
\begin{lemma}\label{lem:free}
Let $0<\sigma<1$ and $k>0$.
Then
\begin{align}
 G_{\sigma,k}(x)&=\frac{k^{2-2\sigma}}{4\pi\sigma}
       \frac{e^{ik|x|}}{|x|}+J_{\sigma,k}(|x|),
                         \label{eq:green-split}\\
 J_{\sigma,k}(d)&=\frac{\sin(\pi\sigma)}{4\pi^2d}
  \int_0^\infty\frac{t^\sigma e^{-d\sqrt t}}
   {t^{2\sigma}-2k^{2\sigma}t^\sigma\cos(\pi\sigma)+k^{4\sigma}}\,dt.
                         \label{eq:green-tail}
\end{align}
The kernel is smooth away from zero and locally integrable, with
\begin{equation}\label{eq:singularity}
 G_{\sigma,k}(x)=d_\sigma|x|^{2\sigma-3}(1+o(1))\quad(|x|\to0),\qquad
 d_\sigma=\frac{\sin(\pi\sigma)\Gamma(2-2\sigma)}{2\pi^2}>0.
\end{equation}
Every derivative of $J_{\sigma,k}(|x|)$ of order $j$ is $O(|x|^{-3-2\sigma-j})$ at infinity.
For $\chi_1,\chi_2\in C_c^\infty(\R^3)$ and $t\in\R$, the localized resolvent operator
\begin{equation}\label{eq:free-local-map}
 \chi_1\cR_0(k)\chi_2:H^t(\R^3)\longrightarrow H^{t+2\sigma}(\R^3)
\end{equation}
is bounded.
Moreover, the localized operator $\chi_1\cR_0(z)\chi_2$ extends holomorphically to $\Re z>0$ in the complex plane using the principal branch of $z^{2\sigma}$.
\end{lemma}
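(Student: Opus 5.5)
The plan is to compute $G_{\sigma,k}$ by inverting the symbol $(|\xi|^{2\sigma}-k^{2\sigma}-i0)^{-1}$ in polar coordinates, splitting it into a residue part at $|\xi|=k$ and a branch-cut part. For a radial multiplier $m(|\xi|)$ in $\R^3$ we have
\[
 G(x)=\frac{1}{2\pi^2|x|}\int_0^\infty m(s)\,s\sin(s|x|)\,ds,
\]
and we write $s\sin(sd)=\Im(se^{isd})$, or equivalently the symmetric form $\frac{1}{2i}\int_{-\infty}^\infty$. To handle the $-i0$ prescription we replace $k^{2\sigma}$ by $k^{2\sigma}+i\eps$, so the pole sits slightly above the positive real axis. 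We then deform the contour for $e^{isd}$ into the upper half plane. The function $s^{2\sigma}$, defined with its principal branch, has a cut along the negative axis. So we rotate the contour to the imaginary axis $s=i\sqrt t$ (substituting $t=\tau^2$).

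Two contributions appear. The first is the residue at $s=k$: there $\frac{d}{ds}s^{2\sigma}=2\sigma k^{2\sigma-1}$, so the residue equals $\frac{k\,e^{ikd}}{2\sigma k^{2\sigma-1}}$. Multiplying by $2\pi i\cdot\frac{1}{2i}\cdot\frac{1}{2\pi^2 d}$ gives $\frac{k^{2-2\sigma}}{4\pi\sigma}\frac{e^{ikd}}{d}$. The second is the integral along the imaginary axis. On it, $s^{2\sigma}=\tau^{2\sigma}e^{\pm i\pi\sigma}$ on the two sides, so the two sides contribute the imaginary part of $(\tau^{2\sigma}e^{i\pi\sigma}-k^{2\sigma})^{-1}$. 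That imaginary part equals $-\sin(\pi\sigma)\tau^{2\sigma}/|\tau^{2\sigma}e^{i\pi\sigma}-k^{2\sigma}|^2$, with denominator $\tau^{4\sigma}-2k^{2\sigma}\tau^{2\sigma}\cos\pi\sigma+k^{4\sigma}$. Setting $t=\tau^2$ yields \eqref{eq:green-tail}. I will track the constants and signs carefully but routinely. The arcs at infinity vanish by Jordan's lemma since the symbol decays like $|s|^{-2\sigma}$. Strictly, the symmetric extension to negative $s$ requires care, and I will instead deform the $\int_0^\infty$ integral directly, using odd/even parity. The denominator never vanishes for $0<\sigma<1$ since $\cos\pi\sigma<1$.

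Next come the properties. Smoothness away from zero is clear. The decay of $J$ and its derivatives follows by Watson's lemma. Near $t=0$ the integrand is $\approx t^\sigma k^{-4\sigma}$, so $\int t^\sigma e^{-d\sqrt t}\,dt\sim c\,d^{-2-2\sigma}$, and with the prefactor this gives $J=O(d^{-3-2\sigma})$. Each $d$-derivative gains a factor $\sqrt t\sim d^{-1}$. For the singularity as $d\to0$, the oscillating term is only $O(d^{-1})$, which is subleading since $2\sigma-3<-1$. The tail is dominated by large $t$, where the integrand is $\approx t^{-\sigma}e^{-d\sqrt t}$. Substituting $t=\tau^2$ gives $\int 2\tau^{1-2\sigma}e^{-d\tau}\,d\tau=2\Gamma(2-2\sigma)d^{2\sigma-2}$, so $J\approx\frac{\sin\pi\sigma\,\Gamma(2-2\sigma)}{2\pi^2}d^{2\sigma-3}$, which matches $d_\sigma$. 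This also gives local integrability.

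For the mapping property \eqref{eq:free-local-map}, split the symbol with a smooth cutoff $\psi$ that equals $1$ near the sphere $|\xi|=k$. Away from the sphere, $(|\xi|^{2\sigma}-k^{2\sigma})^{-1}(1-\psi)$ is a symbol of order $-2\sigma$; it is singular only at $\xi=0$, where it is bounded and hence harmless. This part therefore gains $2\sigma$ derivatives on all of $H^t$. Near the sphere, the operator equals $\psi(D)\cR_0$, whose kernel is smooth, because its symbol is a compactly supported distribution in $\xi$. Sandwiched between the cutoffs $\chi_1,\chi_2$, it maps $H^{-N}$ to $H^N$ for every $N$. For holomorphy, the same contour representation with $k$ replaced by $z$, $\Re z>0$, defines the kernel, which depends holomorphically on $z$ locally uniformly on compact sets of $x$. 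Then $\chi_1\cR_0(z)\chi_2$ is holomorphic in operator norm. The main obstacle is the contour deformation with the branch cut and $-i0$ limit, specifically justifying the constants and signs. Everything else is standard Watson-lemma asymptotics.
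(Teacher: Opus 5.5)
\textbf{Route comparison.} Your contour derivation of \eqref{eq:green-split}--\eqref{eq:singularity} and the Watson-lemma decay are a legitimate alternative to the paper, which simply imports the kernel formulas and asymptotics from the literature. At real $k$, your Fourier cutoff near $|\xi|=k$ is a valid proof of \eqref{eq:free-local-map}, and it does not need the kernel formula. One slip: for $\sigma\le 1/2$ the arcs are not controlled by Jordan's lemma. The integrand is $m(s)\,s\,e^{isd}$, with amplitude $|s|^{1-2\sigma}$ rather than $|s|^{-2\sigma}$, and in that range the radial integral itself diverges. The fix is to insert an Abel factor $e^{-\epsilon s}$, which makes the arcs exponentially small in both the first and fourth quadrants. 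Then deform the contour and let $\epsilon\to0$, using the $\epsilon$-uniform bound $O(|x|^{2\sigma-3})$ near the origin.

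\textbf{The gap: holomorphy in operator norm.} Holomorphy of the kernel in $z$ off the singularity does not give holomorphy of $\chi_1\cR_0(z)\chi_2$ in $\mathcal{L}(H^t,H^{t+2\sigma})$. The paper's proof of Lemma~\ref{lem:fredholm} uses exactly this to get a holomorphic compact family. For it you need a bound on the $2\sigma$-smoothing that is uniform for $z$ in compact subsets of $\Re z>0$, including the near-diagonal singularity. Your only smoothing mechanism is tied to real $k$, and it does not transfer. For $\Im z<0$ the continued kernel contains $e^{iz|x|}/|x|$, which grows exponentially. So it is not the inverse Fourier transform of $(|\xi|^{2\sigma}-z^{2\sigma})^{-1}$; that multiplier gives the decaying physical-sheet kernel, whose boundary values are incoming. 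There is therefore no symbol left to cut off near a sphere.

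\textbf{The missing step.} This is the paper's argument: apply your own splitting at complex $z$. The first term is $z^{2-2\sigma}/\sigma$ times the classical Helmholtz kernel, and its localization is entire from $H^t$ to $H^{t+2}$. The tail $J_{\sigma,z}$ is a superposition of Yukawa kernels $e^{-\sqrt t|x|}/(4\pi|x|)$, so it is the global Fourier multiplier
\[
 \frac{\sin(\pi\sigma)}{\pi}\int_0^\infty
 \frac{t^\sigma\,dt}{(t+|\xi|^2)\,(t^{2\sigma}-2t^\sigma z^{2\sigma}\cos(\pi\sigma)+z^{4\sigma})}.
\]
Factor the second denominator factor as $(z^{2\sigma}-t^\sigma e^{i\pi\sigma})(z^{2\sigma}-t^\sigma e^{-i\pi\sigma})$. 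Since $|\arg z^{2\sigma}|<\pi\sigma$, it is bounded below by $c(t^{2\sigma}+|z|^{4\sigma})$ locally uniformly in $z$. Hence the multiplier is $O(\langle\xi\rangle^{-2\sigma})$ uniformly on compacta. With this uniform bound, Cauchy's formula upgrades holomorphy of matrix elements to operator-norm holomorphy. The same estimate at $z=k$ also gives \eqref{eq:free-local-map} directly, without your sphere cutoff.
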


\begin{proof}
The Green kernel decomposition follows from the fractional-power resolvent identity in \cite[(5.28)]{MS}; the explicit formulas and their local and far-field estimates are given in \cite[Lemma~2.1]{SE}.
We give the operator argument for the last assertions.
Extend the kernel decomposition to $\Re z>0$ using the principal branch.
The Fourier multiplier of its tail term is
\[
 \frac{\sin(\pi\sigma)}{\pi}\int_0^\infty
 \frac{t^\sigma\,dt}
 {(t+|\xi|^2)
  (t^{2\sigma}-2t^\sigma z^{2\sigma}\cos(\pi\sigma)+z^{4\sigma})}.
\]
On every compact subset of this half-plane the denominator's second factor has modulus at least $c(t^{2\sigma}+|z|^{4\sigma})$.
Indeed, it factors as $(z^{2\sigma}-t^\sigma e^{i\pi\sigma}) (z^{2\sigma}-t^\sigma e^{-i\pi\sigma})$, whereas $|\arg z^{2\sigma}|<\pi\sigma$.
Splitting the integral at $t=|\xi|^2$ for $|\xi|\geq1$, and using $t^{\sigma-1}$ as an integrable majorant near zero otherwise, we bound this multiplier by $C\langle\xi\rangle^{-2\sigma}$ locally uniformly in $z$.
The scalar Cauchy formula, with this bound, also gives holomorphy in the operator norm from $H^t$ to $H^{t+2\sigma}$.
The classical Helmholtz resolvent after spatial localization is entire from $H^t$ to $H^{t+2}$.
Its compactly localized kernel is $e^{iz|x-y|}/(4\pi|x-y|)$ and the usual local elliptic estimate gives the stated two-derivative regularity gain.
Multiplication by $z^{2-2\sigma}/\sigma$ preserves holomorphy in the right half-plane.
The sum agrees with the spectral resolvent in the first quadrant and with its outgoing boundary value for real $k>0$.
This proves the localized mapping property and continuation.
The proof is complete.
\end{proof}

Set
\begin{equation}\label{eq:contrast}
 V=q+k^{2\sigma}(1-r).
\end{equation}
For a compactly supported source $f$, the corresponding Lippmann--Schwinger equation is
\[
 u=\cR_0(k)(f-Vu).
\]
The outgoing resolvent for the variable-coefficient equation is denoted by $\cR(k)=(( -\Delta)^\sigma+q-k^{2\sigma}r)^{-1}$.

The Lippmann--Schwinger formulation and discrete exceptional frequencies for deterministic fractional Helmholtz scattering are also studied in \cite[Theorems~4.1 and~4.2]{ZCV}.
In the following lemma, we prove the mapping and uniqueness statements needed here for compactly supported distributional sources.
\begin{lemma}\label{lem:fredholm}
There is a discrete set $\Sigma\subset(0,\infty)$ such that, for $k\notin\Sigma$, every compactly supported source has a unique outgoing solution in the class specified above.
The resolvent satisfies
\begin{align}\label{sm}
 \cR(k):H^t_{\rm comp}(\R^3)\longrightarrow
              H^{t+2\sigma}_{\rm loc}(\R^3),\qquad t\in\R.
\end{align}
The bounds are uniform when the source support and the localization sets are fixed.
Its Green kernel $\cG_k(x,y)$ is symmetric in $x,y$ and smooth away from $x=y$.
\end{lemma}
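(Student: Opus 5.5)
The plan is to reduce the problem to an analytic Fredholm problem for the Lippmann--Schwinger equation. Fix $\chi\in C_c^\infty(B)$ with $\chi=1$ near $K$, so that $V=\chi V$. For a compactly supported source $f$, an outgoing solution $u$ satisfies $u=\cR_0(k)(f-Vu)$. Setting $w=\chi u$, this localizes to
\[
 (I+\chi\cR_0(k)V)w=\chi\cR_0(k)f .
\]
Conversely, given $w$, the formula $u=\cR_0(k)(f-Vw)$ recovers $u$. By Lemma~\ref{lem:free}, $T(z)=\chi\cR_0(z)V$ maps $H^t$ boundedly into $H^{t+2\sigma}$ with support in $\operatorname{supp}\chi$. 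Rellich's theorem then makes $T(z)$ compact on $H^t(\R^3)$, or more precisely on the closed subspace of distributions supported in a fixed ball. Lemma~\ref{lem:free} also gives that $T(z)$ is holomorphic in $\Re z>0$. The analytic Fredholm theorem therefore applies on the connected half-plane. Either $I+T(z)$ is nowhere invertible, or its inverse is meromorphic with a discrete set of poles. Taking $\Sigma$ to be the real poles gives discreteness in $(0,\infty)$.

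To exclude the first alternative, I would exhibit one point $z$ where the operator is invertible. I would take $z=e^{i\pi/4}s$ with $s$ large, or any point in the open first quadrant. There the resolvent is the genuine $L^2$ resolvent of the self-adjoint operator $(-\Delta)^\sigma+q-z^{2\sigma}r$. This is the weighted problem $r^{-1/2}((-\Delta)^\sigma+q)r^{-1/2}-z^{2\sigma}$ after conjugation. Since $\operatorname{Im} z^{2\sigma}\neq 0$ there, this operator is invertible on $L^2$, so $I+T(z)$ has trivial kernel and is invertible by Fredholm. The second alternative then holds, which yields the mapping property \eqref{sm}. The inverse $(I+T(k))^{-1}$ is bounded on $H^t$ for $k\notin\Sigma$, and composing with the localized maps from Lemma~\ref{lem:free} gives $H^t_{\rm comp}\to H^{t+2\sigma}_{\rm loc}$. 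The bounds are uniform for fixed supports. The same argument works for every $t$, so uniqueness in one Sobolev scale carries over to all of them, since kernel elements are smooth by bootstrapping.

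Next I would identify solutions in the stated outgoing class with solutions of the integral equation. Let $u$ be in the class and solve the equation with source $f$. Then $v=u-\cR_0(k)(f-Vu)$ solves $((-\Delta)^\sigma-k^{2\sigma})v=0$. Moreover $v$ is $O(|x|^{-1})$ and outgoing, since the $J$ part of $G_{\sigma,k}$ decays faster and the first term is Helmholtz-outgoing. On the Fourier side, $\widehat v$ is supported on $\{|\xi|^{2\sigma}=k^{2\sigma}\}$, where local integrability at $0$ excludes $\xi=0$. Hence $\widehat v$ is supported on the sphere $|\xi|=k$, so $v$ solves the classical Helmholtz equation. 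A radiating entire Helmholtz solution vanishes by Rellich's lemma. Uniqueness in the class thus reduces to $\ker(I+T(k))=0$. This step is the main obstacle: it requires care with the distributional product near $\xi=0$ and with justifying that $\cR_0(k)$ applied to compactly supported data lands in the class. I would handle the latter using the kernel asymptotics of Lemma~\ref{lem:free}.

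Finally, I would treat symmetry and smoothness of the Green kernel. The kernel of $\cR(z)$ is symmetric for $z$ in the first quadrant, because the operator is a real symmetric differential-type operator and $G_{\sigma,z}$ is even. By meromorphic continuation, $\cG_k(x,y)=\cG_k(y,x)$ on $(0,\infty)\setminus\Sigma$. For smoothness off the diagonal, write
\[
 \cG_k=G_{\sigma,k}-\cR_0 V\cG_k .
\]
For $x\ne y$, localize with cutoffs that separate $x$ from $y$. The singularity at $y$ propagates only through $V$, and repeated application of \eqref{sm} gains $2\sigma$ derivatives each time. Together with the smoothness of $G_{\sigma,k}$ away from $0$, this gives $C^\infty$ regularity off the diagonal.
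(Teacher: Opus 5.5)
Your proposal follows the paper's proof essentially step for step: the localized operator $I+\chi\cR_0(z)V(z)$ with analytic Fredholm theory on $\Re z>0$, and invertibility at a first-quadrant point. You get that invertibility from self-adjointness of $r^{-1/2}((-\Delta)^\sigma+q)r^{-1/2}$, where the paper uses the energy identity $-\Im(z^{2\sigma})\int r|w|^2=0$, which amounts to the same thing. The remaining steps also match the paper: bootstrapping to make $\Sigma$ independent of the Sobolev order, reduction of uniqueness to Rellich's theorem, reciprocity by continuation from the first quadrant, and nested-cutoff bootstrapping off the diagonal.

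The one step to tighten is the passage from Fourier support to the Helmholtz equation. Support of $\widehat v$ on $\{|\xi|=k\}$ alone does not give $(|\xi|^2-k^2)\widehat v=0$, since normal derivatives of the surface measure are also supported there. You should instead use $(|\xi|^{2\sigma}-k^{2\sigma})\widehat v=0$ and multiply by the smooth nonvanishing quotient $b_{\sigma,k}$ near the sphere, as the paper does.
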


\begin{proof}
Choose $\chi\in C_c^\infty(\R^3)$ equal to one near $\overline B$, and for $\Re z>0$ put
\[
 T=I+\chi\cR_0(z)V,\qquad V=q+z^{2\sigma}(1-r).
\]
For every $s\in\R$, Lemma~\ref{lem:free} shows that $T-I$ maps $H^s$ boundedly into $H^{s+2\sigma}$, with output supported in $\supp\chi$.
The compact Sobolev embedding therefore makes it a holomorphic compact family on $H^s$.
In particular, $T$ is Fredholm of index zero on each of these spaces.

First take $z$ in the first quadrant and work on $L^2$.
If $Tv=0$, then $v=-\chi\cR_0(z)Vv$, and repeated use of the $2\sigma$-derivative gain shows that $v$ is smooth and compactly supported.
Put $w=-\cR_0(z)Vv$.
Since $\chi w=v$ and $\chi=1$ on $\supp V$,
\[
 ((-\Delta)^\sigma+q-z^{2\sigma}r)w=0.
\]
The free spectral resolvent maps $L^2$ to $H^{2\sigma}$ globally there, so $w\in H^{2\sigma}(\R^3)$.
Multiplying the equation by $\overline w$, integrating over $\R^3$, and taking imaginary parts gives
\[
 -\Im(z^{2\sigma})\int_{\R^3}r|w|^2\,dx=0.
\]
Since $\Im(z^{2\sigma})>0$ and $r\geq r_0>0$, this implies $w=v=0$.
Thus $T$ is invertible there.
The analytic Fredholm theorem gives a meromorphic inverse on the right half-plane.
We denote by $\Sigma$ its positive real poles.

At any $k\notin\Sigma$, a vector in the kernel of $T$ on an arbitrary $H^s$ is compactly supported and also belongs to $L^2$ by the repeated regularity gain argument.
Its vanishing follows from the $L^2$ inverse.
Index zero then gives a bounded inverse on every $H^s$.
These inverses agree on their common domains, since their difference lies in the kernel on the lower of the two Sobolev spaces.
Consequently the same exceptional set works for every Sobolev order.

For $f\in H^t$ with compact support, define
\[
 v=T^{-1}\chi\cR_0(k)f,\qquad
 u=\cR(k)f=\cR_0(k)(f-Vv),
\]
using the inverse on $H^{t+2\sigma}$.
The defining equation gives $\chi u=v$ and hence $Vu=Vv$.
These identities and Lemma~\ref{lem:free} prove \eqref{sm}.
We also have $u=\cR_0(k)(f-Vu)$.
The effective source $f-Vu$ is a compactly supported distribution.
Its Fourier transform is smooth and of polynomial growth, so
\[
 \widehat u=(|\xi|^{2\sigma}-k^{2\sigma}-i0)^{-1}
                   (\widehat{f-Vu})
\]
is locally integrable near zero and verifies the equation in the specified distributional sense.
The kernel decomposition in Lemma~\ref{lem:free}, including source-variable derivatives of any fixed order, gives uniformly for $y$ in a fixed compact set
\[
 \partial_y^\alpha G_{\sigma,k}(x-y)=O(|x|^{-1}),\qquad
 \partial_y^\alpha(\partial_{|x|}-ik)G_{\sigma,k}(x-y)
                                      =O(|x|^{-2}).
\]
Pairing these estimates with the compactly supported finite-order distribution $f-Vu$ proves that $u$ belongs to the outgoing class.

Conversely, let $u$ be any outgoing solution with compactly supported source $f$.
Then
\[
 h=u-\cR_0(k)(f-Vu)
\]
belongs to the same class and satisfies $(( -\Delta)^\sigma-k^{2\sigma})h=0$.
Near zero this Fourier equation is an equation of locally integrable functions and yields $\widehat h=0$ there.
Away from zero, it gives $\supp\widehat h\subset\{|\xi|=k\}$.
In a neighborhood of this sphere the quotient
\[
 \frac{|\xi|^2-k^2}{|\xi|^{2\sigma}-k^{2\sigma}}
\]
extends smoothly and is nonzero.
Multiplication by this quotient therefore yields $(-\Delta-k^2)h=0$ on all of $\R^3$.
Thus $h$ is a smooth entire Helmholtz solution satisfying the Sommerfeld condition, and classical Rellich uniqueness gives $h=0$.
Every outgoing solution hence satisfies the Lippmann--Schwinger equation and its restriction $\chi u$ satisfies the invertible equation for $T$, proving uniqueness.

To prove reciprocity, take $f,g\in C_c^\infty(\R^3)$ and first let $z$ lie in the first quadrant.
The fields $\cR(z)f$ and $\cR(z)g$ belong to $H^{2\sigma}(\R^3)$.
Bilinear integration by parts for the even Fourier symbol $|\xi|^{2\sigma}$ and for the multiplication operator $q-z^{2\sigma}r$ gives $\int f\cR(z)g=\int g\cR(z)f$.
Both sides are meromorphic functions of $z$, by the localized construction, so continuation gives
\begin{equation}\label{eq:reciprocity}
 \int_{\R^3} f\,\cR(k)g\,dx
   =\int_{\R^3}g\,\cR(k)f\,dx,
 \qquad f,g\in C_c^\infty(\R^3),\quad k\notin\Sigma.
\end{equation}
This is a bilinear identity, and implies the symmetry of the distribution kernel.

Finally, let $\chi_1,\chi_2$ be compactly supported smooth cutoffs with disjoint supports, take $f=\chi_2 h$, and set $u=\cR(k)f$.
Choose a cutoff $\psi$ supported away from $\supp\chi_2$ and equal to one near $\supp\chi_1$.
The Lippmann--Schwinger equation gives
\[
 u=\cR_0(k)f-\cR_0(k)(V\psi u)
                     -\cR_0(k)(V(1-\psi)u).
\]
Near $\supp\chi_1$, the first and third terms have smooth kernels and their Sobolev norms of every order are controlled by a fixed negative Sobolev norm of $h$, using \eqref{sm} for $Vu$.
The middle term gains $2\sigma$ derivatives from the regularity of $u$ on $\supp\psi$.
Repeating this argument with nested cutoffs inside $\R^3\setminus\supp\chi_2$ gives
\[
 \chi_1\cR(k)\chi_2:H^{-N}(\R^3)\longrightarrow H^M(\R^3)
\]
boundedly for every $M,N>0$, starting with the bound \eqref{sm}.
The Schwartz kernel theorem and Sobolev embedding give joint smoothness of $\cG_k(x,y)$ off the diagonal as claimed.
\end{proof}

The real white noise is an isonormal Gaussian process on real $L^2(B)$, extended complex linearly for complex integrands.
Thus
\[
 \E[W(h)W(j)]=\int_B h j,\qquad
 \E[W(h)\overline{W(j)}]=\int_Bh\overline j.
\]
Throughout this section, let \(\mu\in L^\infty(\mathbb R^3)\) be nonnegative with \(\operatorname{supp}\mu\subset K\).
As a consequence of Lemma~\ref{lem:fredholm}, we establish the well-posedness of the direct scattering problem with white noise source.
\begin{theorem}\label{thm:direct}
Let $0<\sigma<1$ and $k\notin\Sigma$.
For the given real white noise, there exists a unique outgoing random distribution solving \eqref{eq:model}--\eqref{src} in the class specified above.
It is the centered Gaussian random distribution $u=\cR(k)(\sqrt\mu\dot W)$ and, on one event of probability one,
\[
 u\in H^t_{\rm loc}(\R^3)\qquad
          \hbox{for every }t<2\sigma-\frac32.
\]
The solution has a smooth modification on $\R^3\setminus K$ given by
\begin{equation}\label{eq:stochastic-green}
 u(x,k)=\int_K\cG_k(x,y)\sqrt\mu(y)\,dW(y),\qquad x\notin K.
\end{equation}
\end{theorem}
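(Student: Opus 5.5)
The argument reduces the stochastic problem to the deterministic resolvent theory of Lemma~\ref{lem:fredholm} by first fixing the Sobolev regularity of the random source. Fix $\chi\in C_c^\infty(B)$ equal to one near $K$. Then $f:=\sqrt\mu\,\dot W=\chi\sqrt\mu\,\dot W$ is a compactly supported random distribution with $\supp f\subset K$.

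The first step is to show that, almost surely, $f\in H^{s}(\R^3)$ for every $s<-3/2$. The standard route is to compute
\[
\E\norm{f}_{H^s}^2=(2\pi)^{-3}\int\langle\xi\rangle^{2s}\,\E|\widehat f(\xi)|^2\,d\xi .
\]
Here $\E|\widehat f(\xi)|^2=\E|W(\sqrt\mu\, e^{-i\xi\cdot})|^2=\int\mu$, so the right-hand side is finite exactly when $2s<-3$. Taking a countable sequence $s_n\uparrow-3/2$ gives a single full-probability event on which $f\in H^s_{\rm comp}$ for all $s<-3/2$, with support in $K$.

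On that event, Lemma~\ref{lem:fredholm} applies pathwise for $k\notin\Sigma$. It yields a unique outgoing solution $u=\cR(k)f$, and by \eqref{sm},
\[
u\in H^{s+2\sigma}_{\rm loc}\qquad\text{for all } s<-3/2,
\]
which is the stated range $t<2\sigma-3/2$. Uniqueness in the class is the deterministic uniqueness of Lemma~\ref{lem:fredholm} applied realization-wise. To check that $u$ is centered Gaussian, pair it with a test function $\phi\in C_c^\infty$. By reciprocity \eqref{eq:reciprocity}, extended by density to distributional $f$ using the $H^{-N}\to H^M$ mapping properties,
\[
\langle u,\phi\rangle=\langle f,\cR(k)\phi\rangle=W\bigl(\sqrt\mu\,\chi\,\cR(k)\phi\bigr).
\]
This is a (complex) Wiener integral of a deterministic $L^2$ function, hence centered Gaussian.

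For the exterior representation \eqref{eq:stochastic-green}, fix $x\notin K$. Choose cutoffs $\chi_1$ near $x$ and $\chi_2$ near $K$ with disjoint supports. The kernel $\cG_k$ is smooth on $\supp\chi_1\times\supp\chi_2$ by Lemma~\ref{lem:fredholm}. I would then proceed in two steps.
\begin{enumerate}
\item Define the random field $U(x)=\int_K\cG_k(x,y)\sqrt\mu(y)\,dW(y)$. Its increments satisfy
\[
\E|\partial^\alpha U(x)-\partial^\alpha U(x')|^2\le C|x-x'|^2
\]
locally uniformly, since $\partial_x^\alpha\cG_k(\cdot,y)$ is smooth and $\sqrt\mu\in L^2$. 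Gaussian hypercontractivity and Kolmogorov's continuity theorem applied to all derivatives then produce a $C^\infty$ modification on compact subsets of $\R^3\setminus K$, hence on the whole exterior by a countable exhaustion.
\item Identify $U$ with $u$ outside $K$. For $\phi$ supported in $\R^3\setminus K$, a stochastic Fubini theorem (justified because the kernel is bounded on the relevant product set) gives $\int U\phi=W(\sqrt\mu\int\cG_k(x,\cdot)\phi(x)\,dx)$. Symmetry of $\cG_k$ identifies this with $\langle u,\phi\rangle$. So almost surely $\langle u,\phi\rangle=\int U\phi$ for a countable dense family of $\phi$, and therefore $u=U$ as distributions on $\R^3\setminus K$.
\end{enumerate}

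The main obstacle I expect is measurability and the single null set. The random map $\omega\mapsto u=\cR(k)f(\omega)$ must be a measurable random distribution. This follows because $\cR(k)$ is a bounded deterministic operator on $H^s_{\rm comp}$ (with support in $K$ fixed) into $H^{s+2\sigma}_{\rm loc}$, composed with the measurable $H^s$-valued variable $f$. The remaining work is bookkeeping to collect all of the above on one event of probability one.
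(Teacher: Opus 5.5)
Your proposal is correct and follows the paper's proof in all essentials. The same white-noise isometry and Fourier computation places $\sqrt\mu\dot W$ in $H^{-b}$ for every $b>3/2$ on one full-probability event, Lemma~\ref{lem:fredholm} is applied realization-wise for existence, uniqueness and the $H^t_{\rm loc}$ regularity, and the exterior field is built from the smooth off-diagonal kernel and identified with $u$ via stochastic Fubini and symmetry of $\cG_k$.

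The differences are only technical. The paper realizes the source as a genuine $H^{-b}$-valued random variable through the Gaussian series $\sum_j W(e_j)\sqrt\mu\,e_j$, which is what justifies your formula for $\E\norm{f}_{H^s}^2$ and makes the Gaussianity of $u=\cR(k)f$ immediate. It also obtains the smooth exterior modification from finite expected Sobolev norms plus Sobolev embedding, rather than from hypercontractivity and Kolmogorov's criterion.
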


\begin{proof}
Define the source on test functions by $\langle\sqrt\mu\dot W,\phi\rangle=\int_B \sqrt\mu\phi\,dW$.
For an orthonormal basis $\{e_j\}$ of real $L^2(B)$, the white-noise isometry and Parseval's identity give, for every $b>3/2$,
\[
 \sum_j\norm{\sqrt\mu e_j}_{H^{-b}(\R^3)}^2
 =\frac{\norm {\sqrt{\mu}}_{L^2(B)}^2}{(2\pi)^3}
       \int_{\R^3}(1+|\xi|^2)^{-b}\,d\xi<\infty.
\]
Thus the Gaussian series with coefficients $\int_Be_j\,dW$ converges in $L^2(\Omega;H^{-b}(\R^3))$ and almost surely in $H^{-b}$, and defines $\sqrt\mu\dot W$ as a distribution supported in $K$.
The realizations for a countable sequence $b\downarrow3/2$ agree as distributions.
Lemma~\ref{lem:fredholm} then constructs $u$, showing that it is centered Gaussian and gives the stated local regularity for all $t<2\sigma-3/2$ using a countable exhaustion of $\R^3$.
It also gives for each compactly supported smooth cutoff $\chi$
\[
 \E\norm{\chi u}_{H^{2\sigma-b}}^2\leq C_{b,\chi}\norm {\sqrt\mu}_{L^2(B)}^2.
\]

On every compact set disjoint from $K$, all derivatives of the kernel in \eqref{eq:stochastic-green} are uniformly square integrable in $y$ after multiplication by $\sqrt\mu$.
The isometry therefore gives finite expected Sobolev norms of every integer order on compact exterior subsets.
Sobolev embedding and a countable exhaustion yield a common smooth modification.
Stochastic Fubini, first against a countable dense family of test functions, identifies this field with the distribution $\cR(k)(\sqrt\mu\dot W)$.

Pathwise, the effective source $\sqrt\mu\dot W-Vu$ is compactly supported and has finite order.
The free-kernel estimates in the proof of Lemma~\ref{lem:fredholm} give $u=O(|x|^{-1})$ and \eqref{src} on the same probability-one event.
The deterministic uniqueness in Lemma~\ref{lem:fredholm} applied for each realization of the prescribed white noise proves the uniqueness.
\end{proof}

\subsection{The continuous mild solutions}

The stochastic representation in Theorem~\ref{thm:direct} is defined outside $K$ for every $0<\sigma<1$.
We now construct a continuous realization on $\R^3$ when $\sigma>3/4$, following the constructive approach of \cite[Theorem~2.7]{BCL2016} and its fractional adaptation in \cite[Section~2.2]{SE}.
For convenience we denote
\[
W(h):=\int_Bh\,dW.
\]

We first give a precise approximation of the random source.
Recall that a bounded operator $T:H_1\to H_2$ between Hilbert spaces is Hilbert--Schmidt if
\[
 \norm T_{\mathrm{HS}}^2:=\sum_{j=1}^{\infty}
       \norm{Te_j}_{H_2}^2<\infty
\]
for an orthonormal basis $\{e_j\}$ of $H_1$.
The sum does not depend on the basis.
Choose a real-valued orthonormal basis $\{e_j\}$ of $L^2(B)$.
In the present setting, the map $J:L^2(B)\to H^{-b}(\R^3)$, $Jh=\sqrt{\mu}h$, satisfies for $b>3/2$
\begin{equation}\label{eq:noise-HS}
 \norm J_{\mathrm{HS}}^2
 =\frac{\norm {\sqrt{\mu}}_{L^2(B)}^2}{(2\pi)^3}
       \int_{\R^3}(1+|\xi|^2)^{-b}\,d\xi<\infty.
\end{equation}
Indeed, using Fourier transformation, Tonelli's theorem and Parseval's identity in $L^2(B)$ we have
\[
 \sum_j\norm{\sqrt{\mu}e_j}_{H^{-b}}^2
 =\frac1{(2\pi)^3}\int_{\R^3}(1+|\xi|^2)^{-b}
   \sum_j\left|\int_B\sqrt{\mu}(y)e_j(y)e^{-iy\cdot\xi}\,dy\right|^2d\xi,
\]
which equals the right-hand side of \eqref{eq:noise-HS}.
Hence, the partial sums $\sum_{j=1}^n W(e_j)Je_j$ converge in $L^2(\Omega;H^{-b}(\R^3))$ as $n\to\infty$.
The pairing of the random source $\sqrt{\mu}\dot W$ with a test function $\phi$ is $W(\sqrt\mu\phi)$, which defines $\sqrt{\mu}\dot W$ as a random distribution.

\begin{proposition}\label{prop:noise-approx}
Let $\{B_{n,j}\}_{j=1}^{N_n}$ be finite measurable partitions of $B$ into sets of positive measure such that $\max_j\operatorname{diam}(B_{n,j})\to0$.
Define
\[
 \dot W_n=\sum_{j=1}^{N_n}
       \frac{W(\mathbf1_{B_{n,j}})}{|B_{n,j}|}\mathbf1_{B_{n,j}},
 \qquad u_n=\cR(k)(\sqrt\mu\dot W_n),
\]
where $\mathbf1_{B_{n,j}}(x)$ is the characteristic function, and $W(\mathbf1_{B_{n,j}}) = \int_{B_{n, j}}{\rm d}W$.
For every $b>3/2$ and $\eta\in C_c^\infty(\R^3)$ we have
\begin{equation}\label{eq:noise-approx-H}
 \E\norm{\eta(u_n-u)}_{H^{2\sigma-b}(\R^3)}^2\longrightarrow0.
\end{equation}
Moreover, for every compact $Q\Subset\R^3\setminus K$ and integer $\ell\geq0$ we have
\begin{align}\label{con}
 \E\norm{u_n-u}_{C^\ell(Q)}^2\longrightarrow0.
\end{align}
\end{proposition}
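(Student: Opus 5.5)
The key observation is that $\dot W_n$ is a projection of the white noise. Let $P_n$ denote the orthogonal projection of $L^2(B)$ onto the span of $\{\mathbf 1_{B_{n,j}}\}_{j=1}^{N_n}$; this is the conditional-expectation (averaging) operator on the partition. Then for every real $\phi\in L^2(B)$,
\[
 \langle\sqrt\mu\dot W_n,\phi\rangle=\sum_j\frac{W(\mathbf 1_{B_{n,j}})}{|B_{n,j}|}\int_{B_{n,j}}\sqrt\mu\phi
 =W\bigl(P_n(\sqrt\mu\phi)\bigr),
\]
and hence
\[
 \langle\sqrt\mu(\dot W_n-\dot W),\phi\rangle=-W\bigl((I-P_n)(\sqrt\mu\phi)\bigr).
\]
Since $\max_j\operatorname{diam}B_{n,j}\to0$, $P_n\to I$ strongly on $L^2(B)$. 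This is standard: it holds first for $g$ continuous on $\overline B$ by uniform continuity, and then extends by density together with $\norm{P_n}\leq1$.

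For \eqref{eq:noise-approx-H}, I first prove $\E\norm{\sqrt\mu(\dot W_n-\dot W)}_{H^{-b}}^2\to0$. By Parseval in the Fourier variable and the isometry of $W$,
\[
 \E\norm{\sqrt\mu(\dot W_n-\dot W)}_{H^{-b}}^2
 =\frac1{(2\pi)^3}\int_{\R^3}(1+|\xi|^2)^{-b}\,
 \norm{(I-P_n)\bigl(\sqrt\mu\, e^{-i\cdot\,\xi}\bigr)}_{L^2(B)}^2\,d\xi .
\]
Here I test the distribution against $e^{-ix\cdot\xi}\mathbf 1_B$, using the complex-linear extension. The integrand tends to zero pointwise in $\xi$ and is dominated by $4\norm{\sqrt\mu}_{L^2}^2(1+|\xi|^2)^{-b}$, which is integrable for $b>3/2$. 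Dominated convergence therefore gives the limit zero.

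One step needs care: identifying the Fourier transform of the random distribution with $W(\sqrt\mu e^{-i\cdot\xi})$ for all $\xi$ simultaneously. I would avoid this by working with the Hilbert--Schmidt representation \eqref{eq:noise-HS}. The difference equals $\sum_j W(e_j)\,J(P_n-I)e_j$ in $L^2(\Omega;H^{-b})$, so its second moment is $\norm{J(P_n-I)}_{\mathrm{HS}}^2$. Computing this with the same Fourier/Tonelli argument as in \eqref{eq:noise-HS} gives exactly the displayed integral.

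Both sources are supported in $K$. Applying the uniform bound \eqref{sm} with $t=-b$ from Lemma~\ref{lem:fredholm} for fixed source support and localization $\eta$, we get
\[
 \E\norm{\eta(u_n-u)}_{H^{2\sigma-b}}^2\leq C\,\E\norm{\sqrt\mu(\dot W_n-\dot W)}_{H^{-b}}^2\to0 .
\]
Linearity of $\cR(k)$ and pathwise definition make this legitimate.

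For \eqref{con}, I use the off-diagonal smoothing of Lemma~\ref{lem:fredholm}. Choose $\chi_1\in C_c^\infty$ equal to one near $Q$ and $\chi_2$ equal to one near $K$, with disjoint supports. Then $u_n-u=\chi_1\cR(k)\chi_2(\sqrt\mu(\dot W_n-\dot W))$ on $Q$, and $\chi_1\cR(k)\chi_2$ maps $H^{-b}$ boundedly into $H^M$ for every $M$. Choosing $M>\ell+3/2$ and using Sobolev embedding $H^M\hookrightarrow C^\ell$, the $C^\ell(Q)$ second moment is bounded by $C\,\E\norm{\sqrt\mu(\dot W_n-\dot W)}_{H^{-b}}^2\to0$. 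The limit $u$ here must be the smooth modification \eqref{eq:stochastic-green}. Since that modification agrees almost surely with the distribution $\cR(k)(\sqrt\mu\dot W)$ on $\R^3\setminus K$, and since $u_n$ is likewise smooth there, the pathwise $C^\ell$ norm of the difference is well defined and equals that of the distributional difference. This completes the plan.

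The main obstacle is the rigorous Hilbert--Schmidt identification of $\E\norm{\cdot}_{H^{-b}}^2$ with the dominated integral. The rest follows from Lemma~\ref{lem:fredholm}.
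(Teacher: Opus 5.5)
Your proposal is correct and follows the paper's proof essentially step by step. It uses the same averaging projection $P_n$ with strong convergence, the identity $\E\norm{\sqrt\mu(\dot W_n-\dot W)}_{H^{-b}}^2=\norm{J(P_n-I)}_{\mathrm{HS}}^2$, the localized bound \eqref{sm} with $t=-b$, and off-diagonal smoothing for the exterior $C^\ell$ estimate. The only cosmetic differences are these: you pass to the limit by dominated convergence in the frequency variable $\xi$, whereas the paper writes $\norm{(P_n-I)J^*}_{\mathrm{HS}}^2$ as a dominated series over an orthonormal basis of $H^{-b}$; and you invoke the mapping $\chi_1\cR(k)\chi_2:H^{-b}\to H^M$ from the proof of Lemma~\ref{lem:fredholm}, whereas the paper uses duality against uniform $H^b$ bounds on $\zeta(y)\partial_x^\alpha\cG_k(x,y)$.
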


\begin{proof}
Since
\[
 \E\norm{\sqrt{\mu}\dot W_n}_{L^2(B)}^2
 =\sum_{j=1}^{N_n}\frac1{|B_{n,j}|}\int_{B_{n,j}}\mu(y)\,dy<\infty,
\]
each regularized random source is an $L^2$ function almost surely.
Let $P_n$ be the orthogonal projection in $L^2(B)$ onto the cellwise constant functions.
Uniform continuity first gives $P_nh\to h$ for every continuous function $h$ on $\overline B$.
Then by density and $\norm{P_n}\leq1$ we obtain the strong convergence on $L^2(B)$.
The white-noise isometry implies
\[
 \E\norm{\sqrt{\mu}\dot W_n-\sqrt{\mu}\dot W}_{H^{-b}}^2
 =\norm{J(P_n-I)}_{\mathrm{HS}}^2
 =\norm{(P_n-I)J^*}_{\mathrm{HS}}^2\longrightarrow0.
\]
To see the last limit, we take the sum over an orthonormal basis $\{f_j\}$ of $H^{-b}(\R^3)$.
For each $j$, the term $\norm{(P_n-I)J^*f_j}_{L^2(B)}^2$ tends to zero and is bounded by $\norm{J^*f_j}_{L^2(B)}^2$, whose sum is finite by \eqref{eq:noise-HS}.
The localized resolvent estimates in Lemma~\ref{lem:fredholm} prove \eqref{eq:noise-approx-H}.
For the exterior estimate, choose $\zeta\in C_c^\infty(\R^3)$ equal to one near $K$ and supported away from $Q$.
Off-diagonal smoothness of the Green function gives uniform $H^b$ bounds for $\zeta(y)\partial_x^\alpha\cG_k(x,y)$ with $x\in Q$, $|\alpha|\leq\ell$.
Duality therefore bounds the exterior $C^\ell$ norm by the $H^{-b}$ norm of the source difference, which proves \eqref{con}.
\end{proof}

\begin{proposition}\label{prop:mild}
Let $3/4<\sigma<1$ and $k\notin\Sigma$.
The solution in Theorem~\ref{thm:direct} has a continuous modification satisfying, on one event of probability one,
\begin{equation}\label{eq:mild}
 u(x)=\int_KG_{\sigma,k}(x-y)\sqrt{\mu(y)}\,dW(y)
       -\int_B G_{\sigma,k}(x-y)V(y)u(y)\,dy,
 \qquad x\in\R^3.
\end{equation}
For the same white noise, this field is unique up to indistinguishability among continuous random fields satisfying \eqref{eq:mild}.
The regularized solutions $u_n$ in Proposition~\ref{prop:noise-approx} and $u$ have common continuous versions such that, for every bounded cube $Q$, $1\leq p<\infty$, and $0<\gamma<2\sigma-3/2$,
\begin{equation}\label{eq:mild-strong-approx}
 \E\norm{u_n-u}_{C^{0,\gamma}(Q)}^p\longrightarrow0.
\end{equation}
Moreover,
\begin{equation}\label{eq:mild-moments}
 \sup_n\E\norm{u_n}_{C^{0,\gamma}(Q)}^p
       +\E\norm u_{C^{0,\gamma}(Q)}^p
 \leq C\norm{\sqrt\mu}_{L^\infty}^p.
\end{equation}
The constant depends only on the fixed $\sigma,k,r,q,K,B,Q,p$, and $\gamma$.
\end{proposition}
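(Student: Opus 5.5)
Since $3/4<\sigma<1$, the free kernel satisfies $|G_{\sigma,k}(x)|\lesssim|x|^{2\sigma-3}$ near zero by \eqref{eq:singularity}, and $2(2\sigma-3)>-3$. Hence $G_{\sigma,k}(x-\cdot)\sqrt\mu\in L^2(K)$ uniformly for $x$ in bounded sets. This is the basic reason a pointwise stochastic integral exists. I will build the continuous field from the free part plus a deterministic Fredholm correction, and prove the approximation statements by working with the regularized problems $u_n$.

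\textbf{Free stochastic part.} Set $U_0(x)=\int_K G_{\sigma,k}(x-y)\sqrt{\mu(y)}\,dW(y)$. This is a centered Gaussian field, and the white-noise isometry gives
\[
\E|U_0(x)-U_0(x')|^2\le \norm{\sqrt\mu}_{L^\infty}^2\int_K|G_{\sigma,k}(x-y)-G_{\sigma,k}(x'-y)|^2dy .
\]
For the kernel difference I will use the standard estimate for $|x|^{2\sigma-3}$, namely a bound $C|x-x'|^{2(2\sigma-3/2)}$ (with a logarithmic loss at worst). The smooth remainder of $G_{\sigma,k}$ given by Lemma~\ref{lem:free} only contributes Lipschitz terms. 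Gaussian hypercontractivity upgrades this to all $p$-th moments, and Kolmogorov's criterion, or Garsia--Rodemich--Rumsey for the H\"older norm itself, then gives a version that is $\gamma$-H\"older on $Q$ for every $\gamma<2\sigma-3/2$, with $\E\norm{U_0}^p_{C^{0,\gamma}(Q)}\le C\norm{\sqrt\mu}^p_{L^\infty}$. The same estimate applies to $U_0^n$, defined with $\sqrt\mu\dot W_n$ in place of $\sqrt\mu\dot W$. Its increments involve $P_n$ applied to the kernel functions, and $\norm{P_n}\le1$, so the bounds are uniform in $n$. Strong convergence $P_n\to I$ on $L^2(B)$, combined with dominated convergence in the Kolmogorov/GRR integral, should give $\E\norm{U^n_0-U_0}^p_{C^{0,\gamma}(Q)}\to0$.

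\textbf{Deterministic correction.} Equation \eqref{eq:mild} reads $u=U_0-\cR_0(k)(Vu)$, where the second term is an integral against the locally integrable kernel $G_{\sigma,k}$. On $X=C(\overline B)$, the operator $A v=\int_B G_{\sigma,k}(\cdot-y)V(y)v(y)dy$ is compact, since its kernel is weakly singular. Its range is H\"older, and in fact $C^{0,\gamma}$ on any cube $Q$ with $\norm{Av}_{C^{0,\gamma}(Q)}\le C\norm v_{C(\overline B)}$. If $v\in X$ solves $v+Av=0$, then $v$ is an outgoing solution with zero source; by the uniqueness in Lemma~\ref{lem:fredholm}, $v=0$ for $k\notin\Sigma$. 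Fredholm theory then gives a bounded inverse $(I+A)^{-1}$ on $X$. I define $u|_B=(I+A)^{-1}(U_0|_B)$ pathwise and extend to all of $\R^3$ by the right side of \eqref{eq:mild}. The resulting field is continuous and satisfies \eqref{eq:mild}. Uniqueness up to indistinguishability follows from injectivity of $I+A$, applied pathwise on the full-measure event where both fields are continuous.

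The same bounded linear map sends $U_0^n$ to $u_n$; here one uses that $u_n$ solves the Lippmann--Schwinger equation with $L^2$ source and is continuous. Therefore the moment bounds and convergence for $U_0^n$ transfer directly to $u_n$ and $u$, giving \eqref{eq:mild-strong-approx} and \eqref{eq:mild-moments}. It remains to identify this field with $\cR(k)(\sqrt\mu\dot W)$ from Theorem~\ref{thm:direct}. Since $u_n\to$ the distributional solution by \eqref{eq:noise-approx-H}, and $u_n\to$ the mild field uniformly on cubes, the two limits agree as distributions almost surely.

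The main obstacle is the H\"older increment estimate for the singular kernel together with the uniform-in-$n$ version. The uniform control of the projected kernels $P_n[G(x-\cdot)-G(x'-\cdot)]$ is immediate from $\norm{P_n}\le1$; the delicate part is the convergence of H\"older norms. For that I will interpolate: first prove convergence in sup norm pointwise in $x$, then use the uniform $\gamma'$-H\"older moment bounds with some $\gamma<\gamma'<2\sigma-3/2$.
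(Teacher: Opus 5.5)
Your proposal is correct and follows essentially the same route as the paper. The free Gaussian part $W(P_nh_x)$ is controlled using $\|P_n\|\le1$, Gaussian moment bounds, and dominated convergence in the Garsia--Rodemich--Rumsey integral; the paper phrases this same step as convergence in $L^s(\Omega;W^{t,s}(Q))$ followed by the embedding into $C^{0,\gamma}(Q)$. The rest also matches: a pathwise Fredholm correction, transfer of the bounds through one fixed bounded linear map, identification with the distributional solution via Proposition~\ref{prop:noise-approx}, and uniqueness from injectivity.

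The differences are minor. You invert $I+A$ on $C(\overline B)$, which needs compactness of weakly singular operators, whereas the paper works on $L^2(B)$, where compactness is immediate from the Hilbert--Schmidt kernel bound. Also, Lemma~\ref{lem:free} does not actually give a smooth remainder after removing the leading singularity. The $L^2$-increment bound for $G_{\sigma,k}$ is an imported estimate (the paper cites \cite[Lemma~2.1]{SE}), although the exponent you claim is the right one.
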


\begin{proof}
Let $P_n$ be the orthogonal projections from the proof of Proposition~\ref{prop:noise-approx}, extended complex linearly.
They satisfy $\norm{P_n}\leq1$ and $P_nh\to h$ in $L^2(B)$; the partitions need not be nested.
Set
\[
 h_x(y)=\sqrt{\mu(y)}G_{\sigma,k}(x-y),\qquad
 w_n(x)=W(P_nh_x),\qquad w(x)=W(h_x).
\]
A direct computation on the partition cells gives $w_n=\cR_0(k)(\sqrt\mu\dot W_n)$.
In this identity $P_n$ acts on the whole product $\sqrt{\mu(y)}G_{\sigma,k}(x-y)$.
The kernel estimates in \cite[Lemma~2.1]{SE} give, on every bounded cube $Q$ and for $0<\beta<2\sigma-3/2$,
\begin{equation}\label{eq:mild-kernel-estimates}
 \sup_{x\in Q}\norm{G_{\sigma,k}(x-\cdot)}_{L^2(B)}\leq C,
 \qquad
 \norm{G_{\sigma,k}(x-\cdot)-G_{\sigma,k}(z-\cdot)}_{L^2(B)}
       \leq C_\beta|x-z|^\beta.
\end{equation}
Thus $\sup_{x\in Q}\norm{h_x}_2\leq C\norm{\sqrt\mu}_\infty$ and $\norm{h_x-h_z}_2\leq C_\beta\norm{\sqrt\mu}_\infty|x-z|^\beta$.
The complex linear extension of the real white noise satisfies $\E|W(h)|^2=\norm h_2^2$ and, for each finite $s\geq2$, $\E|W(h)|^s\leq C_s\norm h_2^s$.

Fix $p$ and $\gamma$ as in the statement.
Choose $\gamma<\beta<2\sigma-3/2$, take a finite $s\geq\max\{2,p\}$ sufficiently large, and then choose $t$ such that
\[
 \gamma+3/s<t<\beta<1.
\]
Each $w_n$ is a finite Gaussian linear combination of deterministic $\beta$-H\"older functions on $Q$, and is therefore a strongly measurable $W^{t,s}(Q)$-valued random variable.
For every $x,z\in Q$, strong convergence of $P_n$ gives convergence to zero of the $s$-th moments of both $(w_n-w_j)(x)$ and $(w_n-w_j)(x)-(w_n-w_j)(z)$ as $n,j\to\infty$.
Moreover,
\[
 \begin{split}
 \E\left|(w_n-w_j)(x)-(w_n-w_j)(z)\right|^s
 &\leq C_s\norm{(P_n-P_j)(h_x-h_z)}_2^s\\
 &\leq C\norm{\sqrt\mu}_\infty^s|x-z|^{s\beta}.
 \end{split}
\]
The corresponding integrand in the fractional Sobolev seminorm is bounded by $C\norm{\sqrt\mu}_\infty^s|x-z|^{s(\beta-t)-3}$, which is integrable on $Q\times Q$.
The pointwise moments are uniformly bounded by $C\norm{\sqrt\mu}_\infty^s$.
Tonelli's theorem and dominated convergence therefore imply
\[
 \E\norm{w_n-w_j}_{W^{t,s}(Q)}^s\longrightarrow0,
 \qquad
 \sup_n\E\norm{w_n}_{W^{t,s}(Q)}^s
       \leq C\norm{\sqrt\mu}_\infty^s.
\]
Since $W^{t,s}(Q)$ is separable, the sequence converges in $L^s(\Omega;W^{t,s}(Q))$.
The continuous embedding $W^{t,s}(Q)\hookrightarrow C^{0,\gamma}(Q)$, \cite[Theorem~8.2]{DNPV}, gives a continuous representative of its limit.
At each fixed $x$, evaluation of this limit agrees with $W(h_x)$, because $w_n(x)\to W(h_x)$ in $L^2(\Omega)$ by the white-noise isometry.
We use $w$ for this continuous modification.
Consequently,
\[
 \E\norm{w_n-w}_{C^{0,\gamma}(Q)}^p\longrightarrow0,
 \qquad
 \sup_n\E\norm{w_n}_{C^{0,\gamma}(Q)}^p
       +\E\norm w_{C^{0,\gamma}(Q)}^p
       \leq C\norm{\sqrt\mu}_\infty^p,
\]
where the $p$-th moment statements follow from $p\leq s$.
The limits obtained on different cubes and for different admissible parameters agree at a countable dense set and hence everywhere by continuity.
A countable exhaustion by cubes and a sequence of exponents increasing to $2\sigma-3/2$ select common versions.
These random variables are strongly measurable in the H\"older norm: they are continuous images of strongly measurable variables in the separable space $W^{t,s}(Q)$.

It remains to transfer these properties to the perturbed equation.
Let
\[
 \mathcal K_kv=(\cR_0(k)(Vv))|_B.
\]
Its kernel $G_{\sigma,k}(x-y)V(y)$ is square integrable on $B\times B$ by \eqref{eq:mild-kernel-estimates}, so $\mathcal K_k$ is compact on $L^2(B)$.
Lemma~\ref{lem:fredholm} gives a bounded inverse of $I+\mathcal K_k$: an element of its kernel produces the homogeneous outgoing solution $-\cR_0(k)(Vv)$, whose restriction to $B$ is $v$, and hence must vanish.
Set
\[
 \begin{aligned}
 v_n&=(I+\mathcal K_k)^{-1}(w_n|_B),&
 v&=(I+\mathcal K_k)^{-1}(w|_B),\\
 u_n&=w_n-\cR_0(k)(Vv_n),&
 \widetilde u&=w-\cR_0(k)(Vv).
 \end{aligned}
\]
The expression for $u_n$ agrees with the regularized solution by the Lippmann--Schwinger equation, and $\widetilde u|_B=v$ in $L^2(B)$ almost surely.
Cauchy--Schwarz and \eqref{eq:mild-kernel-estimates} give
\[
 \norm{\cR_0(k)(Vv)}_{C^{0,\gamma}(Q)}
       \leq C\norm v_{L^2(B)},\qquad
 \norm v_{L^2(B)}\leq C\norm w_{L^2(B)}.
\]
In particular, $\widetilde u$ is continuous and satisfies \eqref{eq:mild}.
Choose a larger bounded cube $Q'$ containing $Q\cup\overline B$.
The same estimates, applied to differences, yield pathwise
\[
 \begin{split}
 \norm{u_n-\widetilde u}_{C^{0,\gamma}(Q)}
 &\leq\norm{w_n-w}_{C^{0,\gamma}(Q)}
                      +C\norm{w_n-w}_{L^2(B)}\\
 &\leq C\norm{w_n-w}_{C^{0,\gamma}(Q')}.
 \end{split}
\]
The analogous bounds for $u_n$ and $\widetilde u$ prove \eqref{eq:mild-strong-approx}--\eqref{eq:mild-moments} with $\widetilde u$ in place of $u$.
These deterministic bounded linear maps also preserve strong measurability.

Finally, take $b=2\sigma>3/2$ in Proposition~\ref{prop:noise-approx}.
It gives $u_n\to u$ in $L^2(\Omega;L^2(Q))$ for every bounded cube, while the preceding argument gives convergence to $\widetilde u$ in the same space.
Thus $\widetilde u=u$ as random distributions, and Theorem~\ref{thm:direct} supplies the equation and the outgoing condition on one event of probability one.
If two continuous fields satisfy \eqref{eq:mild} for the same white noise, their difference $d$ satisfies $(I+\mathcal K_k)(d|_B)=0$.
Thus $d=0$ in $B$, and \eqref{eq:mild} then gives $d=0$ everywhere.
A countable dense set and continuity give equality on one event of probability one.
\end{proof}

\section{Inverse random source problem}\label{ip}

In this section, we establish the stability estimate for the inverse random source problem.
For an open set $W$, let $H^\sigma(W)$ be the restriction of $H^\sigma(\R^3)$ to $W$, with the quotient norm, and let $H^{-\sigma}(W)$ be the dual of $\widetilde H^\sigma(W)$ which is the closure of $C_c^\infty(W)$ in $H^\sigma(\R^3)$.

The following local continuation estimate is the main ingredient for deriving the stability.
Its proof uses the Cauchy interpolation estimate and the interior three-ball inequality for the Caffarelli--Silvestre extension in \cite[Propositions~5.13 and~5.4]{RS}.
\begin{lemma}\label{lem:local-log}
Let $0<\sigma<1$, let $B\subset\R^3$ be a bounded open set, and let $W\subset\R^3$ be a nonempty open ball.
There exists $\kappa>0$, depending only on $\sigma,B,W$, with the following property.
Suppose that $z\in H^\sigma(\R^3)$ satisfies
\begin{equation}\label{eq:local-log-assumptions}
 \norm z_{H^\sigma(\R^3)}\leq M,\qquad
 \norm z_{H^\sigma(W)}+
 \norm{(-\Delta)^\sigma z}_{H^{-\sigma}(W)}\leq\eps,
\end{equation}
where $M>0$ and $\eps\geq0$.
For every fixed $0<\theta_0<\sigma/\kappa$,
\begin{equation}\label{eq:local-log-estimate}
 \norm z_{L^2(B)}\leq CM
 \left(\log\left(3+\frac M\eps\right)\right)^{-\theta_0}.
\end{equation}
The right-hand side is interpreted as zero when $\eps=0$.
The constant depends only on $\sigma,B,W$ and the fixed $\theta_0$.
\end{lemma}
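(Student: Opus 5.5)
We will argue through the Caffarelli--Silvestre extension. Let $\tilde z(x,y)$, $y>0$, be the extension of $z$, i.e.\ the solution of $\nabla\cdot(y^{1-2\sigma}\nabla\tilde z)=0$ in $\R^4_+$ with $\tilde z(\cdot,0)=z$. It satisfies
\[
-c_\sigma\lim_{y\to0}y^{1-2\sigma}\partial_y\tilde z=(-\Delta)^\sigma z
\]
and the energy bound $\norm{y^{\frac{1-2\sigma}{2}}\nabla\tilde z}_{L^2(\R^4_+)}\leq C\norm z_{\dot H^\sigma}\leq CM$. The hypothesis \eqref{eq:local-log-assumptions} then says that both the Dirichlet and the weighted Neumann data of $\tilde z$ on $W\times\{0\}$ are of size $\eps$, with the global bound $M$. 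The conclusion concerns the trace on $B\times\{0\}$, which is generally far from $W$. So the plan has three parts: continue the smallness into the bulk, propagate it through the bulk, and return to the boundary on $B$.

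\emph{Step 1 (boundary to bulk near $W$).} Apply the Cauchy-data interpolation estimate \cite[Proposition~5.13]{RS} on a half-ball $W'\times(0,h)$ with $W'\Subset W$. This gives
\[
\norm{y^{\frac{1-2\sigma}{2}}\tilde z}_{L^2(W'\times(h/4,h/2))}\leq CM^{1-\alpha}\eps^{\alpha}
\]
for some $\alpha\in(0,1)$ depending on $\sigma,W$.

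\emph{Step 2 (propagation in the bulk).} The equation is uniformly elliptic with smooth weight in $\{y>\rho\}$. Using a chain of interior balls and the three-ball inequality \cite[Proposition~5.4]{RS}, we propagate the estimate to the region $\overline{B}\times(\rho,2\rho)$. The number of balls needed grows like $\rho^{-1}$ times geometric constants, though it can be controlled by a fixed number of balls at height $\sim1$ followed by a dyadic descent toward $y=\rho$. This yields a bound of the form
\[
CM^{1-\alpha^{N(\rho)}}\eps^{\alpha^{N(\rho)}},
\]
where $\alpha^{N(\rho)}\approx\rho^{\kappa}$ for some $\kappa>0$. This is where the constant $\kappa$ (depending only on $\sigma,B,W$) is defined: it records the exponential deterioration of the three-ball exponent under the dyadic approach to the boundary.

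\emph{Step 3 (return to the boundary).} Write
\[
z(x)=\tilde z(x,\rho)-\int_0^\rho\partial_y\tilde z(x,y)\,dy .
\]
Cauchy--Schwarz with the weight gives
\[
\norm{z-\tilde z(\cdot,\rho)}_{L^2(B)}\leq C\rho^{\sigma}\norm{y^{\frac{1-2\sigma}{2}}\partial_y\tilde z}_{L^2}\leq C\rho^\sigma M,
\]
since $\int_0^\rho y^{2\sigma-1}\,dy\sim\rho^{2\sigma}$. Averaging $\rho$ over $(\rho,2\rho)$ turns the $L^2$ bulk bound into a trace bound. Combining the three steps,
\[
\norm z_{L^2(B)}\leq C\bigl(\rho^\sigma M+M\,(\eps/M)^{c\rho^{\kappa}}\bigr).
\]
Now optimize in $\rho$ by choosing $\rho^{\kappa}\sim\lambda\,\log\log(3+M/\eps)/\log(3+M/\eps)$, so that the second term is at most $M\log^{-\sigma/\kappa}$ up to the loss. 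The first term then gives $M(\log(3+M/\eps))^{-\sigma/\kappa}$ up to a $\log\log$ factor. Absorbing that factor costs an arbitrarily small amount of the exponent, which is why any $\theta_0<\sigma/\kappa$ is admissible. The case $\eps=0$ follows by letting $\eps\downarrow0$.

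\emph{Main obstacle.} The delicate point is Step 2: obtaining the propagation-of-smallness exponent with quantitative (power-of-$\rho$) dependence as $y\to0$. The weight $y^{1-2\sigma}$ degenerates there, so the three-ball constants are uniform only after rescaling each ball of radius $\sim y$ to unit size. To handle this, we would use the scale invariance of the weighted equation under $(x,y)\mapsto(\lambda x,\lambda y)$ to make the three-ball constants uniform on Whitney balls, and count roughly $\log(1/\rho)$ dyadic layers, each costing a fixed factor in the exponent. This count is what produces the power law $\alpha^{N}=\rho^{\kappa}$.
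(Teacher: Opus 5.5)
Your proposal is correct and follows essentially the same route as the paper. The shared steps are: the Cauchy interpolation estimate near $W$; chains of rescaled three-ball inequalities, moving horizontally at a fixed height and then descending geometrically, so that $N(\rho)\le N_0+N_1\log(1/\rho)$ and $\alpha^{N(\rho)}\gtrsim\rho^\kappa$; the weighted vertical bound $\norm{z-\tilde z(\cdot,t_0)}_{L^2(B)}\le C\rho^\sigma M$ with averaging in $t_0$; and the choice $\rho^\kappa\sim\lambda\log L/L$. One bookkeeping slip: your combined bound should read $CM\bigl(\rho^{\sigma}+\rho^{\sigma-5/2}(\eps/M)^{c\rho^\kappa}\bigr)$, where the factors $\rho^{-3/2}$ and $\rho^{\sigma-1}$ come from covering $B\times(\rho,2\rho)$ by $O(\rho^{-3})$ final balls and from the averaging. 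Taking $\lambda$ large absorbs this prefactor; the paper takes $\lambda=3/(c\kappa)$.
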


\begin{proof}
The estimates below apply to complex-valued functions by treating their real and imaginary parts separately.
Let $Z$ be the Caffarelli--Silvestre extension \cite{CS} of $z$ given by
\[
 Z(\cdot,t)=P_{\sigma,t}*z,\quad t>0,
\]
where
\[
P_{\sigma,t}(x)=c_\sigma
 \frac{t^{2\sigma}}{(|x|^2+t^2)^{(3+2\sigma)/2}}.
\]
Here the constant $c_\sigma$ is chosen such that $\int_{\R^3}P_{\sigma,t}\,dx=1$.
By the extension identities in \cite[Section~4]{RS} we have
\[
 \operatorname{div}(t^{1-2\sigma}\nabla Z)=0\quad\hbox{in }\R^4_+
\]
and
\[
\qquad Z(\cdot,0)=z,\qquad
 \lim_{t\to 0}t^{1-2\sigma}\partial_tZ
       =-a_\sigma(-\Delta)^\sigma z,
\]
where $a_\sigma$ is a positive constant.
The trace and conormal limits hold in $L^2(\R^3)$ and $H^{-\sigma}(\R^3)$, respectively.
The unit-mass kernel gives $\sup_{t>0}\norm{Z(\cdot,t)}_{L^2}\leq\norm z_{L^2}$, while the extension energy identity gives
\[
\int_{\R^4_+}t^{1-2\sigma}|\nabla Z|^2=a_\sigma\norm{(-\Delta)^{\sigma/2}z}_{L^2}^2.
\]
Consequently, for each fixed $H>0$,
\begin{equation}\label{eq:extension-energy-v5}
 \norm{t^{(1-2\sigma)/2}\nabla Z}_{L^2(\R^4_+)}
 +\norm{t^{(1-2\sigma)/2}Z}_{L^2(\R^3\times(0,H))}
 \leq C_HM.
\end{equation}

Assume first that $0<\rho:=\eps/M<1$.
Choose a fixed half-ball whose flat base is contained in $W$.
The local Cauchy interpolation estimate of \cite[Proposition~5.13]{RS}, after translation and a fixed rescaling, gives an interior ball $D_0$ and $a\in(0,1)$ such that
\begin{equation}\label{eq:initial-smallness-v5}
 \norm{t^{(1-2\sigma)/2}Z}_{L^2(D_0)}\leq CM\rho^a.
\end{equation}
Here $a$ is an interpolation exponent local to this proof.
Indeed, both boundary terms in that proposition are bounded by $C\eps$, and its bulk term is bounded by \eqref{eq:extension-energy-v5}.
In addition, its assumptions concern only the local extension equation, so no support condition on $z$ is needed.

Next, for $0<h<h_0<1$, we propagate \eqref{eq:initial-smallness-v5} to balls above $B\times[h,2h]$.
First, we move at a fixed positive height and then descend with geometrically decreasing heights.
The balls may be centered at height $5r$ with radius $r$, as in \cite[Proposition~5.4]{RS}.
During descent, take $r_{j+1}=4r_j/5$ with a fixed horizontal center.
The center distance is $r_j$, so the next ball lies in the preceding ball of radius $2r_j$, while each ball of radius $4r_j$ stays above $t=0$.
The balls remain in a fixed slab of the upper half-space, and the number of steps satisfies
\[
 N(h)\leq N_0+N_1\log(1/h).
\]
The constants are uniform over the bounded target set.
The scaled three-ball inequality and \eqref{eq:extension-energy-v5} yield, for every final ball $D_h$ of radius comparable to $h$,
\[
 \norm{t^{(1-2\sigma)/2}Z}_{L^2(D_h)}
 \leq CM\rho^{a\alpha^{N(h)}}
 \leq CM\rho^{c h^\kappa},
\]
where $\alpha\in(0,1)$ and $c,\kappa>0$ are fixed.
Here the multiplicative constants remain bounded because the normalized iteration involves the sum $1+\alpha+\alpha^2+\cdots$.
A covering by at most $Ch^{-3}$ such balls therefore gives
\begin{equation}\label{eq:thin-slab-v5}
 \norm{t^{(1-2\sigma)/2}Z}_{L^2(B\times(h,2h))}
 \leq CMh^{-3/2}\rho^{c h^\kappa}.
\end{equation}

To recover the trace, for almost every $t_0\in[h,2h]$ the weighted Cauchy--Schwarz inequality and \eqref{eq:extension-energy-v5} imply
\begin{equation}\label{eq:vertical-trace-v5}
 \norm{z-Z(\cdot,t_0)}_{L^2(B)}
 \leq\left(\int_0^{t_0}t^{2\sigma-1}\,dt\right)^{1/2}
 \norm{t^{(1-2\sigma)/2}\partial_tZ}_{L^2(B\times(0,t_0))}
 \leq CMh^\sigma.
\end{equation}
This follows first between positive heights and then by the $L^2$ trace limit.
By averaging we also obtain such a $t_0$ with
\[
 \norm{Z(\cdot,t_0)}_{L^2(B)}
 \leq Ch^{\sigma-1}
 \norm{t^{(1-2\sigma)/2}Z}_{L^2(B\times(h,2h))},
\]
which combined with \eqref{eq:thin-slab-v5} yields
\begin{equation}\label{eq:log-optimization-v5}
 \norm z_{L^2(B)}\leq CM
 \left(h^{\sigma-5/2}\rho^{c h^\kappa}+h^\sigma\right).
\end{equation}
For large $L=\log(1/\rho)$, choose
\[
 h=\left(\frac{3\log L}{c\kappa L}\right)^{1/\kappa},
\]
which is less than $h_0$ for all sufficiently large $L$.
The ratio of the first term in \eqref{eq:log-optimization-v5} to the second one is
\[
 h^{-5/2}\exp(-cLh^\kappa)
 =\left(\frac{c\kappa}{3}\right)^{5/(2\kappa)}
   (\log L)^{-5/(2\kappa)}L^{-1/(2\kappa)}.
\]
This ratio is bounded, and hence
\[
 \norm z_{L^2(B)}\leq CM
 \left(\frac{\log L}{L}\right)^{\sigma/\kappa}.
\]
With $L=\log(3+M/\eps)$, the preceding estimate gives
\begin{equation}\label{eq:local-log-refined}
 \norm z_{L^2(B)}\leq CM
 \left(\frac{\log(\mathrm e+L)}{L}\right)^{\sigma/\kappa},
\end{equation}
for small positive $\eps/M$, with a constant depending only on $\sigma,B,W$.
When $\eps/M$ is bounded below, the same estimate follows from $\norm z_{L^2(B)}\leq M$ after increasing $C$.
For each fixed $0<\theta_0<\sigma/\kappa$, combining \eqref{eq:local-log-refined} and the elementary bound
\[
 \left(\frac{\log(\mathrm e+L)}{L}\right)^{\sigma/\kappa}
 \leq C L^{-\theta_0},\qquad L\geq\log3,
\]
we prove \eqref{eq:local-log-estimate}.
The constant in this last bound may increase as $\theta_0$ approaches $\sigma/\kappa$.
If $\eps=0$, apply the positive-error estimates with upper bounds $\eps_j\downarrow0$ and pass to the limit.
The proof is complete.
\end{proof}

Denoting
\[
 b_{\sigma,k}(\xi)=\frac{|\xi|^2-k^2}{|\xi|^{2\sigma}-k^{2\sigma}}\quad (|\xi|\ne k),\qquad
b_{\sigma,k}(\xi)=\frac{k^{2-2\sigma}}\sigma\quad\hbox{if }|\xi|=k,
\]
which is the positive multiplier introduced in Lemma~\ref{lem:outgoing-UC}.
We set $D=-i\nabla$ and define the Fourier multiplier $b_{\sigma,k}(D)$ by
\[
 \widehat{b_{\sigma,k}(D)f}(\xi)
 =b_{\sigma,k}(\xi)\widehat f(\xi).
\]
The following lemma is useful in the subsequent analysis.

\begin{lemma}\label{lem:coercive-recovery}
Let $J\in H^{2-2\sigma}(\R^3)$ with compact support in $B$, and set $Z=b_{\sigma,k}(D)J$.
Then
\begin{equation}\label{eq:source-recovery}
 \norm J_{L^2(\R^3)}\leq k^{2\sigma-2}\norm Z_{L^2(B)}.
\end{equation}
Moreover,
\begin{equation}\label{eq:source-recovery-energy}
 \norm J_{H^{1-\sigma}(\R^3)}\leq C\norm Z_{L^2(B)},
\end{equation}
where $C$ depends only on $\sigma$ and $k$.
\end{lemma}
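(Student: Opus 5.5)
The plan is a coercivity argument for the multiplier $b_{\sigma,k}$, combined with the fact that $J$ is supported in $B$. That support property lets us pair $Z$ against $J$ using only the values of $Z$ on $B$.

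\emph{Step 1: lower bound and growth of the symbol.} Writing $s=|\xi|$ and $a=s^{2\sigma}$, $c=k^{2\sigma}$, we have
\[
 b_{\sigma,k}(\xi)=\frac{a^{1/\sigma}-c^{1/\sigma}}{a-c}.
\]
This is the slope of the secant of the convex function $a\mapsto a^{1/\sigma}$ (convex since $1/\sigma>1$) between $a$ and the fixed point $c$. At $a=c$ it is the tangent slope $\sigma^{-1}c^{1/\sigma-1}=k^{2-2\sigma}/\sigma$, which matches the definition on $|\xi|=k$. Secant slopes of a convex function increase in the moving endpoint, so $b_{\sigma,k}$ is continuous, positive and nondecreasing in $|\xi|$. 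Hence
\[
 b_{\sigma,k}(\xi)\geq b_{\sigma,k}(0)=\frac{-k^2}{-k^{2\sigma}}=k^{2-2\sigma}.
\]
As $|\xi|\to\infty$, $b_{\sigma,k}(\xi)\sim|\xi|^{2-2\sigma}$. Combined with the positive lower bound, this gives $c\langle\xi\rangle^{2-2\sigma}\leq b_{\sigma,k}(\xi)\leq C\langle\xi\rangle^{2-2\sigma}$, with constants depending only on $\sigma,k$. In particular $J\in H^{2-2\sigma}$ implies $Z\in L^2(\R^3)$.

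\emph{Step 2: the pairing identity.} Since $J$ is supported in $B$, Parseval gives
\[
 \int_B Z\,\overline J\,dx=\int_{\R^3}Z\,\overline J\,dx=\frac1{(2\pi)^3}\int_{\R^3}b_{\sigma,k}(\xi)|\widehat J(\xi)|^2\,d\xi .
\]
The right-hand side is real and nonnegative. By Step 1 it is bounded below both by $k^{2-2\sigma}\norm J_{L^2}^2$ and by $c\norm J_{H^{1-\sigma}}^2$. The left-hand side is bounded above by $\norm Z_{L^2(B)}\norm J_{L^2}$ via Cauchy--Schwarz.

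\emph{Step 3: conclusion.} The first lower bound gives $k^{2-2\sigma}\norm J_{L^2}^2\leq\norm Z_{L^2(B)}\norm J_{L^2}$. Dividing by $\norm J_{L^2}$ (the case $J=0$ is trivial) yields \eqref{eq:source-recovery}. The second lower bound gives
\[
 c\norm J_{H^{1-\sigma}}^2\leq\norm Z_{L^2(B)}\norm J_{L^2}\leq k^{2\sigma-2}\norm Z_{L^2(B)}^2,
\]
which is \eqref{eq:source-recovery-energy}.

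The only genuinely nontrivial point is the monotonicity of $b_{\sigma,k}$, which yields the sharp constant $k^{2\sigma-2}$. I expect the convexity and secant-slope observation in Step 1 to settle it; one should also check continuity across $|\xi|=k$, which that observation already covers.
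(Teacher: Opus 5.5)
Your proposal is correct and follows the paper's proof: a two-sided symbol bound $b_{\sigma,k}\simeq\langle\xi\rangle^{2-2\sigma}$, then the Parseval pairing $\int_B Z\overline J=(2\pi)^{-3}\int b_{\sigma,k}|\widehat J|^2$ with Cauchy--Schwarz. The differences are cosmetic. You get $b_{\sigma,k}\geq k^{2-2\sigma}$ from secant-slope monotonicity of the convex map $a\mapsto a^{1/\sigma}$, where the paper compares $t^\sigma$ with $t$ on $[0,1]$ and $[1,\infty)$. You close the $H^{1-\sigma}$ bound through the $L^2$ estimate, where the paper divides by $\norm J_{H^{1-\sigma}}$.
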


\begin{proof}
For $t=|\xi|^2/k^2$, the inequalities $t^\sigma\geq t$ on $[0,1]$ and $t^\sigma\leq t$ on $[1,\infty)$ give
\begin{equation}\label{eq:positive-regularizer}
 b_{\sigma,k}(\xi)\geq k^{2-2\sigma}.
\end{equation}
The multiplier is continuous and positive, including at $|\xi|=k$, and is asymptotic to $|\xi|^{2-2\sigma}$ as $|\xi|\to\infty$.
Consequently,
\begin{equation}\label{eq:regularizer-energy-comparison}
 c\langle\xi\rangle^{2-2\sigma}
 \leq b_{\sigma,k}(\xi)
 \leq C\langle\xi\rangle^{2-2\sigma}.
\end{equation}
In particular, the assumed regularity of $J$ gives $Z\in L^2(\R^3)$.
Parseval's identity and the support of $J$ imply
\[
 k^{2-2\sigma}\norm J_{L^2}^2
 \leq(2\pi)^{-3}\int_{\R^3}b_{\sigma,k}|\widehat J|^2\,d\xi
 =\int_B\overline JZ\,dx
 \leq\norm J_{L^2}\norm Z_{L^2(B)},
\]
which proves \eqref{eq:source-recovery}.
Keeping the full lower bound in \eqref{eq:regularizer-energy-comparison} gives
\[
 c\norm J_{H^{1-\sigma}}^2
 \leq(2\pi)^{-3}\int_{\R^3}b_{\sigma,k}|\widehat J|^2\,d\xi
 =\int_B\overline JZ\,dx
 \leq\norm J_{H^{1-\sigma}}\norm Z_{L^2(B)}.
\]
The middle quantity is real and nonnegative.
Dividing when $J\ne0$ proves \eqref{eq:source-recovery-energy} and the case $J=0$ is immediate.
The proof is complete.
\end{proof}

For a source $f$ supported in $K$, we let
\[
 w=\cR(k)f\quad {\rm and}\quad z=(-\Delta-k^2)w.
\]
Recalling $V=q+k^{2\sigma}(1-r)$, denote the effective source by
\[
H=f-Vw.
\]
Then we have $w=\cR_0(k)H$, $z=b_{\sigma,k}(D)H$, and the following identity
\begin{equation}\label{eq:regularized-free-equation}
 ((-\Delta)^\sigma-k^{2\sigma})z=(-\Delta-k^2)H
\end{equation}
in the sense of distributions.

\begin{lemma}\label{lem:compact-smoothing}
Let $0<\sigma<1$, $m>3/2$, and let $W_0,W_1$ be nonempty balls with $W_0\Subset W_1\Subset\R^3\setminus\overline B$.
Suppose that $H\in H^m(\R^3)$ is supported in $K$, that $z=b_{\sigma,k}(D)H$, and that
\[
 \norm H_{H^m}\leq M,\qquad
 \norm z_{H^\sigma(W_1)}\leq\eps,
\]
where $M>0$ and $\eps\geq0$.
Set $a=(2-\sigma-m)_+$ and let $\kappa$ be as in Lemma~\ref{lem:local-log} for $B$ and $W=W_0$.
For every fixed $0<\theta_0<\sigma/\kappa$, one has
\begin{equation}\label{eq:smoothed-source-log}
 \norm H_{H^{1-\sigma}}
 \leq CM\left(\log\left(3+\frac M\eps\right)\right)^
 {-\theta_0(m-1+\sigma)/(m-1+\sigma+a)}.
\end{equation}
The right-hand side is zero when $\eps=0$.
The constant depends on the fixed parameters, balls and $\theta_0$.
\end{lemma}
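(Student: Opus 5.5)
The plan is to apply Lemma~\ref{lem:local-log} to a mollified version of $z$, then recover $H$ through Lemma~\ref{lem:coercive-recovery}, and balance the smoothing error against the continuation estimate. The exponent $(m-1+\sigma)/(m-1+\sigma+a)$ suggests exactly this: a loss of $a$ derivatives is traded by smoothing at scale $\lambda^{-1}$. The issue is that $z=b_{\sigma,k}(D)H$ lies in $H^{m-2+2\sigma}$. When $m\geq2-\sigma$ this is at least $H^\sigma$, so $a=0$ and Lemma~\ref{lem:local-log} applies directly. When $m<2-\sigma$ it does not, and we first regularize.

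Fix a smooth Fourier cutoff $\phi_\lambda(\xi)=\phi(\xi/\lambda)$ with $\phi=1$ on the unit ball, and set $H_\lambda=\phi_\lambda(D)H$ and $z_\lambda=b_{\sigma,k}(D)H_\lambda$. Two smoothing bounds are needed.

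First, by \eqref{eq:regularizer-energy-comparison},
\[
 \norm{z_\lambda}_{H^\sigma}\leq C\lambda^{a}M .
\]

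Second,
\[
 \norm{H-H_\lambda}_{H^{1-\sigma}}\leq C\lambda^{-(m-1+\sigma)}M .
\]

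Smallness in $W_0$ must also hold for $z_\lambda$. This is the main obstacle, because $\phi_\lambda(D)$ is nonlocal and $H_\lambda$ is no longer supported in $K$.

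I would try to avoid the obstacle rather than fight it. Write $z_\lambda-z=b_{\sigma,k}(D)(\phi_\lambda(D)-1)H$. Its $H^\sigma(W_0)$ norm is bounded, not by a pseudolocality estimate, but simply by its global $H^{\sigma-(2-2\sigma)-m+ \dots}$ norm. It is cleaner to smooth only in the exterior: since $H$ is supported in $K$ and $W_1$ lies at positive distance from $K$, $z$ is smooth on $W_1$ by the off-diagonal smoothness of the multiplier kernel. So I would instead define $\tilde z=z_\lambda$ and bound $\norm{\tilde z}_{H^\sigma(W_0)}\leq\eps+\norm{z-z_\lambda}_{H^\sigma(W_0)}$. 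The kernel of $b_{\sigma,k}(D)(1-\phi_\lambda(D))$ decays rapidly off the diagonal with a gain $\lambda^{-N}$ for every $N$. This gives
\[
 \norm{z-z_\lambda}_{H^\sigma(W_0)}\leq C_N\lambda^{-N}M ,
\]
which is negligible.

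For the term $\norm{(-\Delta)^\sigma\tilde z}_{H^{-\sigma}(W_0)}$, use \eqref{eq:regularized-free-equation}: in $W_1$, where $H$ vanishes, $(-\Delta)^\sigma z=k^{2\sigma}z$. Then $(-\Delta)^\sigma\tilde z$ is controlled on $W_0$ by $k^{2\sigma}\eps$ plus the same $\lambda^{-N}M$ error. Hence the hypotheses of Lemma~\ref{lem:local-log} hold with $M'=C\lambda^aM$ and $\eps'=C(\eps+\lambda^{-N}M)$.

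Next, Lemma~\ref{lem:local-log} gives
\[
 \norm{\tilde z}_{L^2(B)}\leq C\lambda^aM\left(\log\left(3+\frac{\lambda^aM}{\eps'}\right)\right)^{-\theta_0}.
\]
To pass from $\tilde z$ to $H_\lambda$, apply Lemma~\ref{lem:coercive-recovery} to $H$ itself, writing $\norm{z}_{L^2(B)}\leq\norm{\tilde z}_{L^2(B)}+C\lambda^{-(m-2+2\sigma)+\dots}$. The tail $z-z_\lambda$ has $L^2$ norm at most $C\lambda^{-(m-2+2\sigma)}M$ when $m-2+2\sigma>0$, and otherwise it is handled through the $H^{1-\sigma}$ form of \eqref{eq:source-recovery-energy} directly. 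Alternatively, and more robustly, compare
\[
 \norm H_{H^{1-\sigma}}^2\approx\int\overline H z
 =\int\overline H\tilde z+\int\overline H(z-\tilde z),
\]
and bound the last term by $\norm H_{H^{m}}\norm{z-z_\lambda}_{H^{-m}}\leq C\lambda^{-(2m-2+2\sigma)}M^2$. This gives
\[
 \norm H_{H^{1-\sigma}}\leq CM\bigl(\lambda^{a}L^{-\theta_0}+\lambda^{-(m-1+\sigma)}\bigr)^{1/2\cdot 2}
\]
up to harmless constants, where $L=\log(3+M/\eps)$. The factor $\lambda^{-N}$ inside $\eps'$ is absorbed by choosing $\lambda\leq L^{C}$, which keeps the logarithm comparable to $L$.

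Finally, optimize by choosing $\lambda=L^{\theta_0/(m-1+\sigma+a)}$. This yields the rate $L^{-\theta_0(m-1+\sigma)/(m-1+\sigma+a)}$, which is \eqref{eq:smoothed-source-log}. When $\eps=0$ the bound holds for every $L$, so $H=0$. The genuinely delicate step remains verifying smallness of the mollified field on $W_0$ with the correct powers.
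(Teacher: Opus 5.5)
There is a genuine gap in the step where you feed the Fourier-truncated field $z_\lambda$ into Lemma~\ref{lem:local-log}. Your pseudolocality bound is fine. But it only gives Cauchy data on $W_0$ of size $\eps'=C(\eps+C_N\lambda^{-N}M)$, and your claim that this tail is absorbed by taking $\lambda\le L^{C}$ is false.

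Since $\eps\approx Me^{-L}$ while $\lambda^{-N}M\ge L^{-CN}M$, the tail dominates for large $L$. Then $M'/\eps'\approx\lambda^{a+N}/C_N$, and the logarithm in \eqref{eq:local-log-estimate} is $O(N\log L)$, not comparable to $L$. For fixed $N$ your bound becomes $CM\bigl(\lambda^a(\log\lambda)^{-\theta_0}+\lambda^{-(m-1+\sigma)}\bigr)$. For $a>0$ this does not tend to zero as $L\to\infty$, whatever $\lambda$ you choose. Nor does it give $H=0$ when $\eps=0$, since $\eps'>0$ there as well.

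You cannot escape by taking $\lambda$ large enough to make the tail exponentially small. The factor $\lambda^a$ in $M'$ forces $\lambda$ to remain a power of $L$. The obstruction is intrinsic to a frequency cutoff: $H_\lambda$ leaks out of $K$, so $(-\Delta-k^2)H_\lambda$ does not vanish on $W_0$.

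The missing idea is to regularize in a way that is exactly local. In the case $a>0$ (so $\sigma<1/2$ and $0<m-1+\sigma<1$), the paper proceeds as follows.
\begin{itemize}
\item It takes an even $\phi\in C_c^\infty(B_1(0))$ of unit mass and $e_0$ with $K+\overline B_{e_0}(0)\Subset B$ and $\overline W_0+\overline B_{e_0}(0)\Subset W_1$.
\item For $e\le e_0$ it sets $H_e=\phi_e*H$ and $z_e=\phi_e*z=b_{\sigma,k}(D)H_e$.
\item $H_e$ is still supported in $B$, so $((-\Delta)^\sigma-k^{2\sigma})z_e=(-\Delta-k^2)H_e=0$ holds exactly on $W_0$.
\item On $W_0$ one has $z_e=\phi_e*(\psi z)$ with $\psi\in C_c^\infty(W_1)$. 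Hence $\norm{z_e}_{H^\sigma(W_0)}+\norm{(-\Delta)^\sigma z_e}_{H^{-\sigma}(W_0)}\le C\eps$, with no additive error.
\end{itemize}
Your two smoothing bounds are replaced by $\norm{H-H_e}_{H^{1-\sigma}}\le CMe^{m-1+\sigma}$, which follows from $|1-\widehat\phi(e\xi)|\le C\min\{e^2|\xi|^2,1\}$, and by $\norm{z_e}_{H^\sigma}\le CMe^{-a}$. Lemma~\ref{lem:local-log} with data $(CMe^{-a},C\eps)$ now keeps the logarithm comparable to $L$. Then \eqref{eq:source-recovery-energy} applies directly to $J=H_e$, and the choice $e=L^{-\theta_0/(m-1+\sigma+a)}$ gives \eqref{eq:smoothed-source-log}. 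Your $a=0$ case agrees with the paper.

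A smaller point concerns your recovery step. Applying coercivity to $H$ itself through $\int\overline Hz$ is a reasonable way to sidestep the support issue there. However, the displayed bound with exponent $1/2\cdot 2$ is not what the computation gives.
\begin{itemize}
\item If you bound the cross term by $M\norm{z_\lambda}_{L^2(B)}$, the square root leaves only $\lambda^{a/2}L^{-\theta_0/2}$, and the final exponent is strictly smaller than the stated one.
\item To get the stated exponent, bound the cross term by $\norm H_{H^{1-\sigma}}\norm{z_\lambda}_{L^2(B)}$. Then use that $cX^2\le AX+B^2$ implies $X\le C(A+B)$.
\end{itemize}
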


\begin{proof}
The identity \eqref{eq:regularized-free-equation} gives $(-\Delta)^\sigma z=k^{2\sigma}z$ on $W_1$.
If $a=0$, then $m\geq2-\sigma>2-2\sigma$, so $H\in H^{2-2\sigma}$ and $\norm z_{H^\sigma(\R^3)}\leq CM$.
The local equation bounds the conormal datum in $H^{-\sigma}(W_0)$ by $C\eps$.
Lemma~\ref{lem:local-log} and \eqref{eq:source-recovery-energy}, applied directly to $J=H$, give the asserted estimate in this case; using \eqref{eq:local-log-refined} gives \eqref{eq:smoothed-source-refined} below with $a=0$.

Suppose now that $a>0$.
Then $0<\sigma<1/2$ and $3/2<m<2-\sigma<2$, and in particular $0<m-1+\sigma<1$.
Choose an even nonnegative function $\phi\in C_c^\infty(B_1(0))$ with $\int\phi=1$.
Fix $e_0\in(0,1)$ such that
\[
 K+\overline B_{e_0}(0)\Subset B,\qquad
 \overline W_0+\overline B_{e_0}(0)\Subset W_1.
\]
For $0<e\leq e_0$, set
\[
 \phi_e(x)=e^{-3}\phi(x/e),\qquad
 H_e=\phi_e*H,\qquad z_e=\phi_e*z=b_{\sigma,k}(D)H_e.
\]
Thus $H_e\in C_c^\infty(B)$.
Evenness and unit mass give
\[
 |1-\widehat\phi(e\xi)|
 \leq C\min\{e^2|\xi|^2,1\}
 \leq Ce^{m-1+\sigma}|\xi|^{m-1+\sigma}.
\]
This inequality, the rapid decay of $\widehat\phi$, and \eqref{eq:regularizer-energy-comparison} yield
\begin{equation}\label{eq:smoothing-priors}
 \norm{H-H_e}_{H^{1-\sigma}}\leq CMe^{m-1+\sigma},\qquad
 \norm{z_e}_{H^\sigma(\R^3)}\leq CMe^{-a}.
\end{equation}
For the second bound, the total multiplier order is $\sigma+2-2\sigma=2-\sigma=m+a$.

Choose $\psi\in C_c^\infty(W_1)$ equal to one near $\overline W_0+\overline B_{e_0}(0)$.
On $W_0$, $z_e=\phi_e*(\psi z)$, and
\[
 \norm{\psi z}_{H^\sigma(\R^3)}
 \leq C\norm z_{H^\sigma(W_1)}.
\]
Convolution is uniformly bounded on $H^\sigma$.
Also,
\[
 ((-\Delta)^\sigma-k^{2\sigma})z_e=(-\Delta-k^2)H_e=0
 \quad\hbox{on }W_0,
\]
because $H_e$ remains supported in $B$.
Consequently,
\begin{equation}\label{eq:smoothed-cauchy-data}
 \norm{z_e}_{H^\sigma(W_0)}
 +\norm{(-\Delta)^\sigma z_e}_{H^{-\sigma}(W_0)}\leq C\eps.
\end{equation}

For sufficiently small positive $\eps/M$, let $L=\log(3+M/\eps)$.
Applying Lemma~\ref{lem:local-log} with global bound $CMe^{-a}$ and local bound $C\eps$ gives
\begin{equation}\label{eq:regularized-log}
 \norm{z_e}_{L^2(B)}\leq CMe^{-a}L^{-\theta_0}
\end{equation}
and
\begin{equation}\label{eq:regularized-refined}
 \norm{z_e}_{L^2(B)}\leq CMe^{-a}
 \left(\frac{\log(\mathrm e+L)}{L}\right)^{\sigma/\kappa}.
\end{equation}
Indeed, $e^{-a}\geq1$, and the logarithm corresponding to these two bounds is at least $L$ minus a fixed constant.
The function $\log(\mathrm e+L)/L$ is positive and decreasing for $L>0$; fixed changes in $L$ can therefore be absorbed in $C$ when $L$ is large.

Since $H_e$ is smooth and compactly supported, it satisfies the $H^{2-2\sigma}$ assumption of Lemma~\ref{lem:coercive-recovery}.
Apply \eqref{eq:source-recovery-energy} with $J=H_e$ and $Z=z_e$.
Together with \eqref{eq:smoothing-priors} and \eqref{eq:regularized-log}, this gives
\[
 \norm H_{H^{1-\sigma}}
 \leq CM\bigl(e^{m-1+\sigma}+e^{-a}L^{-\theta_0}\bigr).
\]
Choosing $e=L^{-\theta_0/(m-1+\sigma+a)}\leq e_0$ proves \eqref{eq:smoothed-source-log}.
The same balance with $\Phi(L)=(\log(\mathrm e+L)/L)^{\sigma/\kappa}$ and $e=\Phi(L)^{1/(m-1+\sigma+a)}$ gives the useful refined bound
\begin{equation}\label{eq:smoothed-source-refined}
 \norm H_{H^{1-\sigma}}
 \leq CM\Phi(L)^{(m-1+\sigma)/(m-1+\sigma+a)}.
\end{equation}
For non-small $\eps/M$, both estimates follow from the a priori bound by enlarging $C$.
For $\eps=0$, use positive upper bounds tending to zero.
The proof is complete.
\end{proof}

Now we complete the proof of Theorem~\ref{thm:pair-log}.
\begin{proof}
Write $\eta=\mu_1-\mu_2$ and $\Delta F=F_1-F_2$.
By Lemma~\ref{lem:runge}, choose a fixed $g\in C_c^\infty(O;\C)$ such that
\begin{equation}\label{eq:fixed-probe}
 v=\cT g\quad\hbox{and}\quad \norm{v-1}_{C(K)}\leq\frac14.
\end{equation}
The function $v$ is smooth near $K$ and satisfies $|v|\geq3/4$ on $K$.
Set
\[
 f=\eta v,\qquad w=\cR(k)f.
\]
Then $\norm f_{H^m}\leq CE$.
The bilinear white-noise isometry gives
\begin{equation}\label{eq:covariance}
 F(x,z;k)=\int_K\mu(y)\cG_k(x,y)\cG_k(z,y)\,dy,
 \qquad x,z\in O.
\end{equation}
Reciprocity and Fubini's theorem imply
\begin{equation}\label{eq:contracted-data}
 w(x)=\int_O\Delta F(x,\zeta;k)g(\zeta)\,d\zeta,
 \qquad \norm w_{L^2(O)}\leq C\delta.
\end{equation}

Choose concentric balls $W_0\Subset W_1\Subset O_1\Subset O_2\Subset O$.
The smoothness of the Green kernel off the diagonal gives
\begin{equation}\label{eq:exterior-prior}
 \norm w_{H^4(O_2)}\leq C\norm f_{L^2(K)}\leq CE.
\end{equation}
After multiplication by a cutoff supported in $O_2$ and equal to one near $\overline O_1$, interpolation between $L^2$ and $H^4$ gives
\begin{equation}\label{eq:local-interpolated-data}
 \norm w_{H^{\sigma+2}(O_1)}\leq CE^{1-b}\delta^b,
\end{equation}
where $b=(2-\sigma)/4>0$.
Put $H=f-Vw$ and $z=(-\Delta-k^2)w=b_{\sigma,k}(D)H$.
Lemma~\ref{lem:fredholm} and the smoothness of $V$ imply $\norm H_{H^m}\leq CE$ and $\supp H\subset K$.
Moreover,
\begin{equation}\label{eq:small-cauchy-data}
 \norm z_{H^\sigma(W_1)}\leq CE^{1-b}\delta^b.
\end{equation}
For small positive $\delta/E$, the logarithms $\log(3+c(E/\delta)^b)$ and $\log(3+E/\delta)$ are comparable.
Lemma~\ref{lem:compact-smoothing}, with $M=CE$, therefore yields
\[
 \norm H_{H^{1-\sigma}}\leq CE
 \left(\log\left(3+\frac E\delta\right)\right)^%
 {-\theta_0(m-1+\sigma)/(m-1+\sigma+a)},
 \qquad a=(2-\sigma-m)_+.
\]
The identity $w=\cR_0(k)H$ and the localized mapping property in Lemma~\ref{lem:free} give
\[
 f=H+V\cR_0(k)H,\qquad
 \norm f_{H^{1-\sigma}}\leq C\norm H_{H^{1-\sigma}}.
\]
By continuity, $v$ is nonzero in a neighborhood of $K$.
Choose $\chi\in C_c^\infty(\R^3)$ equal to one near $K$ and supported in that neighborhood.
The function $\chi/v$, extended by zero, is smooth and compactly supported, and $\eta=(\chi/v)f$.
Thus
\begin{equation}\label{eq:pair-L2-log}
 \norm\eta_{L^2}\leq\norm\eta_{H^{1-\sigma}}
 \leq CE\left(\log\left(3+\frac E\delta\right)\right)^%
 {-\theta_0(m-1+\sigma)/(m-1+\sigma+a)}.
\end{equation}

Since $0<1-\sigma<3/2<m$, Fourier decomposition at radius $\rho\geq1$ gives
\[
 \norm\eta_{L^\infty}
 \leq C\rho^{1/2+\sigma}\norm\eta_{H^{1-\sigma}}
      +C\rho^{3/2-m}\norm\eta_{H^m}.
\]
For $\eta\ne0$, choose
\[
 \rho=\left(\frac{\norm\eta_{H^m}}
                    {\norm\eta_{H^{1-\sigma}}}\right)^{1/(m-1+\sigma)}
 \geq1
\]
to balance the two terms.
This gives
\begin{equation}\label{eq:L2-Linfty-interpolation}
 \norm\eta_{L^\infty}
 \leq C\norm\eta_{H^{1-\sigma}}^{(m-3/2)/(m-1+\sigma)}
        \norm\eta_{H^m}^{(1/2+\sigma)/(m-1+\sigma)}.
\end{equation}
The case $\eta=0$ is immediate.
Combining these estimates with $\norm\eta_{H^m}\leq2E$ proves \eqref{eq:pair-log} with
\[
 \theta=\theta_0\frac{m-3/2}{m-1+\sigma+a}
       =\theta_0\frac{m-3/2}{\max\{m-1+\sigma,1\}}.
\]
If $\delta/E$ is bounded below, the result follows from the Sobolev bound after increasing $C$.
If $\delta=0$, use arbitrary positive upper bounds $\delta_j\downarrow0$ in the preceding argument and pass to the limit.
The proof is complete.
\end{proof}

\begin{corollary}\label{cor:pair-log-refined}
Under the assumptions of Theorem~\ref{thm:pair-log}, let $\kappa$ be the constant in Lemma~\ref{lem:local-log} for $B$ and $W=W_0$, where $W_0$ is chosen in the proof of that theorem.
For $\delta>0$, put $L=\log(3+E/\delta)$.
Then
\begin{equation}\label{eq:pair-log-refined}
 \norm{\mu_1-\mu_2}_{L^\infty}
 \leq CE\left(\frac{\log(\mathrm e+L)}{L}\right)^%
 {\frac{\sigma}{\kappa}\frac{m-3/2}{\max\{m-1+\sigma,1\}}}.
\end{equation}
The right-hand side is interpreted as zero when $\delta=0$.
\end{corollary}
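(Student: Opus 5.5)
We repeat the proof of Theorem~\ref{thm:pair-log} step by step, replacing each use of the coarse estimate \eqref{eq:local-log-estimate} by its refined version \eqref{eq:local-log-refined}. The refined rate $\Phi(L)=(\log(\mathrm e+L)/L)^{\sigma/\kappa}$ is positive and decreasing for large $L$, and changing $L$ by a bounded amount or by a bounded factor only changes $\Phi$ by a bounded factor. Because of these two facts, every step that worked with powers of $L$ works equally well with $\Phi$.

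First, fix $g$, $v$, $f=\eta v$, $w=\cR(k)f$ and the balls $W_0\Subset W_1\Subset O_1\Subset O_2\Subset O$ as in that proof. The estimates \eqref{eq:contracted-data}--\eqref{eq:small-cauchy-data} give $\norm H_{H^m}\leq CE$ and $\norm z_{H^\sigma(W_1)}\leq CE^{1-b}\delta^b$, where $b=(2-\sigma)/4$. Put $\eps=CE^{1-b}\delta^b$ and $M=CE$. Then
\[
\log(3+M/\eps)\geq b\log(E/\delta)-C\geq bL-C'
\]
for small $\delta/E$, so $\Phi(\log(3+M/\eps))\leq C\Phi(L)$. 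Next apply the refined bound \eqref{eq:smoothed-source-refined} of Lemma~\ref{lem:compact-smoothing}. The paper already derives it in both cases $a=0$ and $a>0$, with $\kappa$ taken for $B$ and $W=W_0$. This gives
\[
\norm H_{H^{1-\sigma}}\leq CE\,\Phi(L)^{(m-1+\sigma)/(m-1+\sigma+a)}.
\]

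Then transfer this bound to $\eta$ exactly as before. The relation $f=H+V\cR_0(k)H$ gives $\norm f_{H^{1-\sigma}}\leq C\norm H_{H^{1-\sigma}}$, and multiplying by the cutoff $\chi/v$ gives $\norm\eta_{H^{1-\sigma}}\leq C\norm f_{H^{1-\sigma}}$. Now apply the interpolation inequality \eqref{eq:L2-Linfty-interpolation} together with $\norm\eta_{H^m}\leq 2E$. The result is $\norm\eta_{L^\infty}\leq CE\,\Phi(L)^{\frac{(m-1+\sigma)(m-3/2)}{(m-1+\sigma+a)(m-1+\sigma)}}$. Since $m-1+\sigma+a=\max\{m-1+\sigma,1\}$, the exponent of $\log(\mathrm e+L)/L$ is exactly $\frac{\sigma}{\kappa}\frac{m-3/2}{\max\{m-1+\sigma,1\}}$, which is \eqref{eq:pair-log-refined}.

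It remains to handle the regimes excluded so far. If $\delta/E$ is bounded below, then $L$ is bounded and $\Phi(L)$ is bounded below. The estimate then follows from $\norm\eta_{L^\infty}\leq C\norm\eta_{H^m}\leq CE$ after enlarging $C$; here we use $m>3/2$ for the Sobolev embedding. For $\delta=0$, $\eta=0$ follows from uniqueness in Theorem~\ref{thm:pair-log} (the $\eps=0$ case of Lemma~\ref{lem:local-log}).

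The only real obstacle is bookkeeping. In the small-$\delta$ regime we must check that passing from $\log(3+M/\eps)$ to $L$ costs only a constant factor. This uses the monotonicity of $\log(\mathrm e+L)/L$ and its comparability under affine changes of $L$, as already noted in the proof of Lemma~\ref{lem:compact-smoothing}.
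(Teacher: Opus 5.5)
Your proposal is correct and follows the paper's own proof essentially step for step. Like the paper, you rerun the proof of Theorem~\ref{thm:pair-log} with the refined bound \eqref{eq:smoothed-source-refined}, use that $\log(3+M/\eps)$ and $L$ are comparable for small $\delta/E$ (and hence so are the values of $\log(\mathrm e+L)/L$ at them), transfer the estimate to $\eta$, interpolate via \eqref{eq:L2-Linfty-interpolation} using $m-1+\sigma+a=\max\{m-1+\sigma,1\}$, and treat non-small and zero $\delta$ exactly as before.
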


\begin{proof}
Repeat the preceding proof using \eqref{eq:smoothed-source-refined}.
For small positive $\delta/E$, both the logarithms $\log(3+c(E/\delta)^b)$ and $\log(3+E/\delta)$, and the values of $\log(\mathrm e+L)/L$ at these logarithms, are comparable.
The resulting exponent on the $H^{1-\sigma}$ bound is $\frac{\sigma}{\kappa}\frac{m-1+\sigma}{m-1+\sigma+a}$.
Interpolation in \eqref{eq:L2-Linfty-interpolation} gives \eqref{eq:pair-log-refined}.
Non-small and zero discrepancies are handled exactly as in that proof.
\end{proof}

The refined estimate implies the pure logarithmic estimates of Theorem~\ref{thm:pair-log} for every fixed $0<\theta_0<\sigma/\kappa$.

We next show that no uniform H\"older estimate holds on the full admissible class.
Specifically, there is no $\alpha>0$ and finite constant $C$ such that
\begin{equation}\label{eq:impossible-holder}
 \norm{\mu_1-\mu_2}_{L^\infty}
 \leq C\norm{F_1-F_2}_{L^2(O\times O)}^\alpha,
 \qquad\mu_1,\mu_2\in\mathcal A.
\end{equation}
To verify this, assume the H\"older stability holds for some constants $\alpha>0$ and $C>0$.
Choose nonnegative $\varphi,\psi\in C_c^\infty(\operatorname{int}K)$ such that $\varphi=1$ on a ball and $\psi\geq\varphi$.
For integers $n\geq1$, let
\[
 \mu_{1,n}=c\big(\psi+n^{-m}\varphi\cos(ny_1)\big),\qquad
 \mu_{2,n}=c\big(\psi-n^{-m}\varphi\cos(ny_1)\big).
\]
These functions are nonnegative.
The Fourier translation formula shows that $\norm{n^{-m}\varphi\cos(ny_1)}_{H^m}$ is bounded independently of $n$.
Choose a fixed $c>0$ sufficiently small such that both strengths belong to $\mathcal A$.
Letting $\eta_n=\mu_{1,n}-\mu_{2,n}$, we have
\begin{equation}\label{eq:oscillating-lower}
 \norm{\eta_n}_{L^\infty}\geq c_0n^{-m}
\end{equation}
for all sufficiently large $n$, since $\cos(ny_1)$ attains the value one in the ball where $\varphi=1$.

By \eqref{eq:covariance}, the correlation difference is
\[
 \Delta F_n(x,\zeta)
 =2c n^{-m}\int_K\cG_k(x,y)\cG_k(\zeta,y)
                    \varphi(y)\cos(ny_1)\,dy.
\]
All source-variable derivatives of the kernel factor are uniformly bounded on $\overline O\times\overline O\times\supp\varphi$.
Repeated integration by parts in $y_1$ therefore gives, for every integer $N\geq1$,
\begin{equation}\label{eq:oscillating-upper}
 \norm{\Delta F_n}_{L^2(O\times O)}\leq C_N n^{-m-N}.
\end{equation}
If \eqref{eq:impossible-holder} held, then \eqref{eq:oscillating-lower}--\eqref{eq:oscillating-upper} would imply $c_0n^{-m}\leq C C_N^\alpha n^{-\alpha(m+N)}$.
Choosing $N$ with $\alpha(m+N)>m$ and letting $n\to\infty$ we obtain a contradiction.

We now prove Theorem~\ref{thm:ordered-holder} under the assumption that $\mu_1-\mu_2$ has a fixed sign.
\begin{proof}
Using Lemma~\ref{lem:runge} we may choose $g\in C_c^\infty(O;\C)$ such that $v=\cT g$ satisfies $\norm{v-1}_{C(K)}\leq1/4$.
Thus, on the set $K$ we have $\Re v\geq3/4$ and $|\Im v|\leq1/4$, which gives $\Re(v^2)\geq1/2$.
Write $\eta=\mu_1-\mu_2$.
Since the sign of $\eta$ is fixed, we obtain
\[
 \frac12\norm\eta_{L^1}
 \leq\left|\int_K\eta(y)v(y)^2\,dy\right|
 =\left|\int_{O\times O}(F_1-F_2)(x,z;k)g(x)g(z)\,dx\,dz\right|
 \leq\norm g_{L^2(O)}^2\delta.
\]
Thus we prove the first estimate in \eqref{eq:ordered-holder}.
Further, combining the Fourier inversion and the estimate $|\widehat\eta|\leq\norm\eta_{L^1}$, we have for $R\geq1$
\[
 \norm\eta_{L^\infty}
 \leq C\bigl(R^3\norm\eta_{L^1}
       +R^{3/2-m}\norm\eta_{H^m}\bigr).
\]
Since $\norm\eta_{H^m}\leq2E$, optimizing $R$ when $\delta/E$ is small gives the second estimate in \eqref{eq:ordered-holder}.
For larger $\delta/E$, this estimate still holds after increasing $C$.

To prove the sharpness of the H\"older exponent, choose $y_0$ in the interior of $K$ and a nonzero smooth function $\varphi\geq0$ in $C_c^\infty(B_1)$.
For small $h>0$, consider the strength functions $ \mu_h$ and $\nu_h$ with
\[
 \mu_h(y)=cE h^{m-3/2}\varphi((y-y_0)/h),
 \qquad \nu_h=0,
\]
where $c>0$ is a constant such that $\norm{\mu_h}_{H^m}\leq E$.
Then $\norm{\mu_h}_{L^\infty}$ is comparable to $ Eh^{m-3/2}$, and $\norm{\mu_h}_{L^1}$ is comparable to $Eh^{m+3/2}$.
Since the Green kernel is smooth and bounded on the separated set $\overline O\times K$, from \eqref{eq:covariance} we have $\norm{F_{\mu_h}-F_{\nu_h}}_{L^2(O\times O)}\leq CEh^{m+3/2}$, where $F_{\mu_h}$ and $F_{\nu_h}$ denote the correlation data with $\mu=\mu_h$ and $\mu=\nu_h$, respectively.
Hence, any H\"older stability estimate with an exponent $\alpha>\beta$ would imply $h^{m-3/2}\leq Ch^{\alpha(m+3/2)}$ for all small $h$, which is a contradiction.
\end{proof}

\section{Conclusion}

We have studied an inverse random source problem for the fractional Helmholtz equation in an inhomogeneous medium.
For admissible frequencies, we established well-posedness of the radiating solution with a spatial white noise source.
The solution has a smooth representation outside the source region and, when the fractional order exceeds $3/4$, admits a continuous mild realization.

Using two-point correlations of the field measured on an exterior open set at a single frequency, we obtained logarithmic stability for the recovery of the source strength under an a priori Sobolev bound.
We have shown that a uniform Hölder estimate is unavailable for the general admissible class.
When the difference between two source strengths has a fixed sign, the stability improves to Lipschitz continuity in $L^1$ and Hölder continuity in $L^\infty$ with an optimal H\"older exponent.
These results show that quantitative recovery is possible from fixed-frequency correlation measurements even when the fractional wave propagates through an inhomogeneous medium.
The analysis combines a resolvent formulation of the direct problem with approximation and quantitative continuation for the fractional Laplacian.

\appendix

\section{Useful lemmas}

We first establish a continuation property for the free outgoing resolvent.
\begin{lemma}\label{lem:outgoing-UC}
Let $F$ be a compactly supported distribution and $w=\cR_0(k)F$.
If $w$ and $(-\Delta)^\sigma w$ vanish on a nonempty open set, then $w=0$ and $F=0$.
\end{lemma}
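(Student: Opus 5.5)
The plan is to pass from the fractional equation to the classical Helmholtz operator through the multiplier $b_{\sigma,k}$, run a unique continuation argument for the Caffarelli--Silvestre extension, and finish with Rellich's lemma.

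\emph{Step 1: the auxiliary field.} Put $z=(-\Delta-k^2)w=b_{\sigma,k}(D)F$. This is the same construction as \eqref{eq:regularized-free-equation}, with $H$ replaced by $F$. Near the sphere $|\xi|=k$ the multiplier $b_{\sigma,k}$ is smooth and positive, and at infinity it is a smooth symbol of order $2-2\sigma$. Hence $z$ is a tempered distribution that is smooth outside $\supp F$ and satisfies
\[
 ((-\Delta)^\sigma-k^{2\sigma})z=(-\Delta-k^2)F .
\]
Let $U$ be the nonempty open set where $w=(-\Delta)^\sigma w=0$. After shrinking $U$ we may assume it is disjoint from $\supp F$; if $U$ met $\operatorname{int}\supp F$, we would first move to a subset where $F$ vanishes, or otherwise pass through the exterior region. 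On $U$ we get $z=0$, because $w=0$ there. We also get $(-\Delta)^\sigma z=0$ on $U$: indeed $(-\Delta)^\sigma z=(-\Delta-k^2)(-\Delta)^\sigma w$, since Fourier multipliers commute, and $(-\Delta)^\sigma w$ vanishes on $U$.

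\emph{Step 2: unique continuation.} The strong unique continuation property of the fractional Laplacian \cite{GSU} says that if $z$ and $(-\Delta)^\sigma z$ both vanish on an open set, then $z\equiv0$. I would apply it to $z$, or to $w$ directly. The main obstacle is that this result is stated for $z\in H^s(\R^3)$. Here $w$ decays only like $|x|^{-1}$, so $w\notin L^2$. To handle this I would localize: choose $\chi\in C_c^\infty$ equal to one on a large ball containing $U$ and $\supp F$, and write $(-\Delta)^\sigma(\chi w)=(-\Delta)^\sigma w-(-\Delta)^\sigma((1-\chi)w)$. On $U$ the last term is smooth, since $(1-\chi)w$ is supported away from $U$ and the kernel of $(-\Delta)^\sigma$ is smooth off the diagonal. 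It is not zero, however, so the localized problem is not directly of the required form.

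The cleaner route is the Caffarelli--Silvestre extension used in Lemma~\ref{lem:local-log}. Because the Poisson kernel $P_{\sigma,t}$ has decay $|x|^{-3-2\sigma}$, the extension is well defined for fields with $|x|^{-1}$ decay. It has vanishing Cauchy data on $U\times\{0\}$, and the unique continuation for the degenerate equation $\operatorname{div}(t^{1-2\sigma}\nabla Z)=0$ used in \cite{RS,RulandUCP} then gives $Z\equiv0$. Taking the trace yields $w\equiv0$ on $\R^3$. Equivalently, one can rerun the argument of Lemma~\ref{lem:local-log} with $\eps=0$, which gives vanishing on every bounded set.

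\emph{Step 3: conclusion.} Once $w\equiv0$, the Fourier identity $\widehat w=(|\xi|^{2\sigma}-k^{2\sigma}-i0)^{-1}\widehat F$, combined with $((-\Delta)^\sigma-k^{2\sigma})w=F$ in the distributional sense fixed in Section~\ref{dp}, gives $F=0$. In the route through $z$, if only $z\equiv0$ has been established, then $(-\Delta-k^2)w=0$ on $\R^3$. Since $w$ is outgoing, Rellich uniqueness gives $w=0$, exactly as in the uniqueness part of Lemma~\ref{lem:fredholm}, and then $F=0$ as above.
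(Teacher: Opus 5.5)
Your argument can be completed, but Step 2 is a detour the paper avoids: the obstacle you identify concerns $w$, not $z$. The paper applies \cite[Theorem~1.2]{GSU} directly to $z$.

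Since $\widehat z=b_{\sigma,k}\widehat F$, you can read off $z$'s regularity:
\begin{itemize}
\item $b_{\sigma,k}$ is continuous, positive and polynomially bounded. It cancels exactly the outgoing pole on $|\xi|=k$ that produces the $|x|^{-1}$ tail of $w$.
\item $\widehat F$ is smooth with polynomial growth.
\end{itemize}
So $\widehat z$ is a continuous function of polynomial growth, and $z\in H^{-N}(\R^3)$ for some $N$. That theorem is stated for $u\in H^{r}(\R^n)$ with arbitrary real $r$, so it gives $z\equiv0$ immediately. Positivity of $b_{\sigma,k}$ then forces $\widehat F=0$, with no Rellich step.

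Your alternative extends $w$ itself and uses boundary unique continuation for $\operatorname{div}(t^{1-2\sigma}\nabla Z)=0$. It does work in principle, because that continuation result is local. It is also more general, since it needs no global Sobolev membership. But it essentially re-proves \cite{GSU}, and it relies on facts you assert without proof. You would need to show that $P_{\sigma,t}*w$, for a finite-order distribution with an $e^{ik|x|}/|x|$ tail:
\begin{itemize}
\item has trace $w$;
\item has weighted conormal limit $-a_\sigma(-\Delta)^\sigma w$, where $(-\Delta)^\sigma w$ is taken in the Fourier sense fixed in Section~\ref{dp};
\item lies locally in the weighted $H^1$ space up to $U\times\{0\}$.
\end{itemize}

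Your remark that one could equivalently rerun Lemma~\ref{lem:local-log} with $\eps=0$ is not right as stated. That lemma uses the global bound $\norm z_{H^\sigma(\R^3)}\leq M$ and the global energy identity, and both fail for $w\notin L^2$; a localized version would be required.

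Finally, shrinking $U$ in Step 1 is unnecessary. $F=((-\Delta)^\sigma-k^{2\sigma})w$ already vanishes on $U$.
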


\begin{proof}
Following the spectral regularization in \cite[Theorem~3.2]{SE}, set $z=(-\Delta-k^2)w$.
Cancellation of the outgoing Fourier pole gives
\begin{equation}\label{eq:regularizer}
 \widehat z(\xi)=b_{\sigma,k}(\xi)\widehat F(\xi),\qquad
 b_{\sigma,k}(\xi)=\frac{|\xi|^2-k^2}{|\xi|^{2\sigma}-k^{2\sigma}}\quad(|\xi|\ne k).
\end{equation}
The value of $ b_{\sigma,k}(\xi)$ on $|\xi|=k$ is $k^{2-2\sigma}/\sigma$.
The multiplier is positive and grows at most polynomially.
Since $\widehat F$ is a smooth function with polynomial growth, we have $z\in H^{-N}(\R^3)$ for some $N>0$.
On the given open set,
\[
 z=0\quad{\rm and}\quad(-\Delta)^\sigma z=(-\Delta-k^2)(-\Delta)^\sigma w=0.
\]
The unique continuation theorem for the fractional Laplacian \cite[Theorem~1.2]{GSU} gives $z=0$.
Positivity of $b_{\sigma,k}$ in \eqref{eq:regularizer} then yields $\widehat F=0$ which gives $F=0$, and hence $w=0$.
Near $\xi=0$, these multiplier products are products of locally integrable functions; away from zero they have their usual distributional meaning.
\end{proof}

For $g\in C_c^\infty(O;\C)$, define
\begin{equation}\label{eq:weights}
 (\cT g)(y)=\int_O\cG_k(x,y)g(x)\,dx=(\cR(k)g)(y).
\end{equation}
\begin{lemma}\label{lem:runge}
Let $k\notin\Sigma$ and let $O$ be a nonempty bounded open subset of $\R^3\setminus\overline B$.
Then
\begin{equation}\label{eq:density}
 \{\cT g|_K:g\in C_c^\infty(O;\C)\}
 \quad\hbox{is dense in }C(K;\C).
\end{equation}
\end{lemma}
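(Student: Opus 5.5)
We prove the density \eqref{eq:density} by Hahn--Banach duality combined with the unique continuation result of Lemma~\ref{lem:outgoing-UC}. Suppose the span is not dense. Then there is a complex Radon measure $\nu$ on $K$, not identically zero, such that
\[
 \int_K(\cT g)(y)\,d\nu(y)=0\qquad\text{for all } g\in C_c^\infty(O;\C).
\]
The measure $\nu$ is a compactly supported distribution. It lies in $H^{-b}(\R^3)$ for every $b>3/2$, by the Fourier bound $|\widehat\nu|\leq|\nu|(K)$.

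\emph{Step 1.} Set $w=\cR(k)\nu$; this is well defined by Lemma~\ref{lem:fredholm}, since $k\notin\Sigma$. By the symmetry of $\cG_k$ and Fubini, which is justified because $\cG_k$ is smooth on $\overline O\times K$,
\[
 0=\int_K\int_O\cG_k(x,y)g(x)\,dx\,d\nu(y)=\int_O g(x)\,w(x)\,dx .
\]
Hence $w=0$ on $O$. Here $w(x)=\int_K\cG_k(x,y)\,d\nu(y)$ is smooth outside $K$. To make the reciprocity identity \eqref{eq:reciprocity} legitimate for a measure rather than a $C_c^\infty$ function, we mollify $\nu$ and pass to the limit using the mapping property \eqref{sm}.

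\emph{Step 2.} We pass to the free problem. Put $F=\nu-Vw$, which is compactly supported in $\overline B$ (in fact in $K$, since $\supp V\subset K$). Then $w=\cR_0(k)F$ by the Lippmann--Schwinger equation from the proof of Lemma~\ref{lem:fredholm}. Outside $\overline B$ we have $((-\Delta)^\sigma-k^{2\sigma})w=F=0$. On $O$ this gives $(-\Delta)^\sigma w=k^{2\sigma}w=0$. So $w$ and $(-\Delta)^\sigma w$ both vanish on the open set $O$, and Lemma~\ref{lem:outgoing-UC} yields $F=0$ and $w=0$ on all of $\R^3$.

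\emph{Step 3.} Since $w=0$, we get $\nu=F+Vw=0$, contradicting the choice of $\nu$.

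The main obstacle is Step 2: the equation must be read in the distributional outgoing class for a measure-valued source. In particular, $(-\Delta)^\sigma w$ has to be computed on $O$ even though $w$ is nonzero near $K$ a priori, because the fractional Laplacian is nonlocal. This is handled by using the identity $((-\Delta)^\sigma-k^{2\sigma})w=F$ directly, which holds globally in the sense of distributions as established in Lemma~\ref{lem:fredholm}, and restricting it to $O$.
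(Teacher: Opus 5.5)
Your proposal is correct and follows essentially the same route as the paper. Both arguments use Hahn--Banach/Riesz to obtain an annihilating measure $\nu$, reciprocity to show that $w=\cR(k)\nu$ vanishes on $O$, and then the Lippmann--Schwinger form $w=\cR_0(k)(\nu-Vw)$ together with Lemma~\ref{lem:outgoing-UC} to conclude $w=0$ and hence $\nu=0$. Your mollification argument justifying reciprocity for a measure-valued source is a small detail the paper leaves implicit.
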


\begin{proof}
If the density failed, the Hahn--Banach and Riesz representation theorems would give a nonzero finite complex Borel measure $\nu$ supported in $K$ such that
\[
 \int_K(\cT g)(y)\,d\nu(y)=0
 \qquad\hbox{for every }g\in C_c^\infty(O;\C).
\]
Its bounded Fourier transform implies $\nu\in H^{-b}(\R^3)$ for every $b>3/2$, so $w=\cR(k)\nu$ is well-defined.
Reciprocity and the separated smooth kernel give
\[
 \int_K(\cT g)\,d\nu=\int_O w(x)g(x)\,dx,
\]
and hence $w=0$ on $O$.
Since $\bigl(( -\Delta)^\sigma-k^{2\sigma}r(x)+q(x)\bigr)w=\nu$, $r=1$, and $q=0$ on $O$, we also have $(-\Delta)^\sigma w=k^{2\sigma}w=0$ on $O$.
Moreover, we have
\[
 w=\cR_0(k)(\nu-Vw),
\]
with a compactly supported distribution on the right.
Then Lemma~\ref{lem:outgoing-UC} yields $w=0$, and the equation $\bigl(( -\Delta)^\sigma-k^{2\sigma}r(x)+q(x)\bigr)w=\nu$ gives $\nu=0$, which is a contradiction.
\end{proof}

\section*{Acknowledgements}
 X. Xu is partially supported by the National Key Research and Development Program of China (No. 2024YFA1012303), National Natural Science Foundation of China (No. 12525112), and the Open Research Project of Innovation Center of Yangtze River Delta, Zhejiang University. Y. Zhao is supported by National Natural Science Foundation of China (No. 12371421).

\section*{Conflict of Interest Statement}

The authors declare that they have no conflict of interest.

\section*{Data Availability Statement}
Data sharing is not applicable to this article as no datasets were generated or analyzed during the current study.

\end{document}